








\documentclass{amsart}

\usepackage{amsmath, amsfonts, amsthm, amssymb, verbatim}

\usepackage[mathcal]{euscript}



\vfuzz2pt 


\newtheorem{thm}{Theorem}[section]
\newtheorem{cor}[thm]{Corollary}
\newtheorem{lem}[thm]{Lemma}
\newtheorem{prop}[thm]{Proposition}
 \newtheorem{defn}[thm]{Definition}

 \newtheorem{rem}[thm]{Remark}

 \numberwithin{equation}{section}

 \newcommand {\Phim} {\Phi_\eta}

 \newcommand {\Thetam}{\Theta_\eta}

 \newcommand {\veps}{v_\varepsilon}

 \newcommand {\vepsn}{v_{\varepsilon,n}}

 \newcommand {\veta} {v_{\varepsilon}^{\eta}}

 \newcommand{\QQ}{\mathbb{R}^N\times(0,\infty)}

  \newcommand{\set}[1]{\left\{#1\right\}}


\begin{document}


\title[Hamilton-Jacobi Equations]{Sharp decay estimates and vanishing viscosity\\ for diffusive Hamilton-Jacobi equations}

\author{ Sa\"\i d Benachour \and Matania Ben-Artzi \and Philippe Lauren\c{c}ot }

\address{Sa\"\i d Benachour: Institut Elie Cartan , Universit\'{e} Henri
Poincar\'e,  F-54506 Vand\oe uvre-l\`{e}s-Nancy Cedex, France.} 
\email{Said.Benachour@iecn.u-nancy.fr}

\address{Matania Ben-Artzi: Institute of Mathematics, Hebrew
University of Jerusalem, Jerusalem 91904, Israel.} 
\email{mbartzi@math.huji.ac.il}

\address{Philippe Lauren\c{c}ot: Institut de Math\'{e}matiques de
Toulouse, CNRS UMR~5219, Universit\'{e} de Toulouse, 118 Route de
Narbonne, F--31062 Toulouse Cedex 9, France.} 
\email{Philippe.Laurencot@math.univ-toulouse.fr}

\thanks{M. Ben-Artzi thanks the Institut Elie Cartan , Universit\'{e}
de Nancy I, for the hospitality during February 2000, when this work
was started.} 

\subjclass{Primary 35F25; Secondary 35K15, 35K55, 49L25}

\keywords{Hamilton-Jacobi equation, Bernstein estimates, general initial data}

\date{\today}


\begin{abstract}
Sharp temporal decay estimates are established for the gradient and time derivative of solutions to the Hamilton-Jacobi equation
$\partial_t \veps + H(|\nabla_x \veps|)=\varepsilon\ \Delta \veps$ in $\QQ$, the parameter $\varepsilon$ being either positive or zero. Special care is given to the dependence of the estimates on $\varepsilon$. As a by-product, we obtain convergence of the sequence $(\veps)$ as $\varepsilon\to 0$ to a viscosity solution, the initial condition being only continuous and either bounded or non-negative. The main requirement on $H$ is that it grows superlinearly or sublinearly at infinity, including in particular $H(r)=r^p$ for $r\in [0,\infty)$ and $p\in (0,\infty)$, $p\ne 1$.
\end{abstract}


\maketitle


\section{\textbf {Introduction}}

The purpose of this paper is to derive temporal decay estimates for the gradient and the time derivative of viscosity solutions to the Hamilton-Jacobi equation
\begin{eqnarray}
\label{HJ}
\partial_t v + H(|\nabla_x v|) & = & 0,  \qquad (x,t) \in \QQ ,\\
\label{initial}
v(x,0) & = & \varphi(x),  \qquad  x\in \mathbb{R}^N.
\end{eqnarray}
and its diffusive counterpart
\begin{eqnarray}
\partial_t \veps-\varepsilon \Delta \veps+H(|\nabla \veps|) & = & 0,
\qquad (x,t) \in \QQ , \quad \varepsilon>0, \label {HJV}\\ 
\veps(x,0) & = & \varphi(x),\qquad  x\in \mathbb{R}^N,
\label{init}
\end{eqnarray}
under suitable assumptions on the Hamiltonian function $H$ and for initial data $\varphi$ which are continuous but not necessarily uniformly continuous (and in some cases not even bounded). The main feature of our analysis is that we carefully trace the dependence on the ``viscosity'' parameter $\varepsilon$ in the estimates of the space-time gradients of $\veps$. We obtain estimates which do not deteriorate as $\varepsilon\to 0$ and thus reflect the regularizing effect of the nonlinear term $H(|\nabla \veps|)$. As a by-product of our analysis, we may perform the limit $\varepsilon\to 0$ (the so-called vanishing viscosity limit) and show the convergence of the solutions $v_\varepsilon$ to the nonlinear parabolic equation \eqref{HJV}-\eqref{init} without requiring much on the initial condition (besides continuity and either boundedness or only non-negativity). The limiting solutions we obtain are ``viscosity solutions'' in the sense of Crandall \& Lions \cite{CL83}, and we refer  to \cite{BCD97,Ba94,Cr97,Li82} for extensive discussions of these solutions and to \cite[Chapter~10]{Ev98} for the connection between viscosity solutions and the ``vanishing viscosity'' approach.

The main tool in this work consists of uniform (with respect to
$\varepsilon$) estimates of the (space-time) gradient of
$\veps$. These estimates enable us to treat the more general initial
data $\varphi$ as mentioned above. Roughly speaking, the main requirement placed on our Hamiltonian function
$H=H(r)$, $0\leq r<\infty,$ is that it grows either ``superlinearly'' or
``sublinearly'' as $r \rightarrow \infty$. More precisely,  the basic set of assumptions \eqref{Hass}-\eqref{Hass3} on $H$ is the following. 

\begin{equation}
\label{Hass}
\begin{cases}
& \bullet\ H\quad \mbox{is continuous nonnegative on}\quad
[0,\infty)\quad \mbox{and}\quad H(0)=0.\\ 
& \mbox{In addition,} \quad H\quad \mbox{is locally Lipschitz
continuous on} \quad(0,\infty).\\ 
& \bullet\ \mbox{There exists a family of nonnegative smooth functions}\\
& \{\Phim\}_{\eta>0} \quad\mbox{defined in}\quad [0,\infty)\quad
\mbox{such that}\\
(i)&\quad \Phim(0)=0 \quad \mbox{for all} \quad \eta>0.\\
(ii)&\quad \Phim(r^2) \xrightarrow[\eta\rightarrow
{0}^{+}]{}H(r),\quad \mbox{uniformly  in compact intervals of}\quad
[0,\infty). 
 \end{cases}
 \end{equation}

\begin{defn}
 \label{pcondi}
 Consider the family of functions $\{\Thetam\}_{\eta>0}$ defined by
$$
\Thetam(r)=2r\Phim'(r)-\Phim(r),\qquad (r,\eta)\in [0,\infty)\times (0,\infty).
$$
Let $p\in (0,1)\cup(1,\infty).$ We say that $H$ satisfies the
\textbf{$p$-condition} if there exist $\gamma>0,\quad a>0, \quad b>0,
$ such that, for $r>0$ and sufficiently small $\eta>0,$ 
$$
\begin{cases}
(i)\quad \Thetam(r)\geq
ar^{\frac{p}{2}}-b\eta^\gamma,\quad\mbox{if}\quad p\in(1,\infty),\\ 
(ii) \quad \Thetam(r) \leq -ar^{\frac{p}{2}}+b\eta^\gamma
,\quad\mbox{if}\quad p\in(0,1).  
\end{cases}
$$
\end{defn}

Our third basic assumption is
\begin{equation}
\label{Hass3}
\qquad \mbox{ For some $p\in (0,1)\cup(1,\infty)$,}\quad H \quad
\mbox{satisfies the $p$-condition}. 
\end{equation}

As we show in  Appendix~\ref{apA} to this paper, the prototypical example

\begin{equation}
\label{special}
H(r)=r^p,\quad p\in (0,1)\cup (1,\infty),\quad 
\end{equation}
satisfies the above assumptions with
$\Phim(r)=(r+\eta^2)^{\frac{p}{2}}-\eta^p \quad.$ In fact, the same
argument shows that one can take 
\begin{equation}
\label{specialpl}
H(r)=\sum\limits_{k=1}^m \mu_k r^{p_k},\quad \mu_k>0,
\end{equation}
where either $\set{p_1,...,p_m}\in (0,1)^m,$ or $\set{p_1,...,p_m}\in
(1,\infty)^m.$ 

We can easily extend further this special case, as shown by the
following example. 

 \begin{prop}
Let $p>1$ and let $G$ be a smooth function supported in $[r_0,\infty)$
for some $r_0>0$. Assume that for some $q\geq p$ and $\lambda>0$ we have 
$$
\frac{d}{dr}\left( \frac{G(r)}{r} \right)\geq \lambda r^{q-2},\quad r>r_0.
$$
Then the function $H~: r \longmapsto r^p+G(r)$  satisfies all the
assumptions~\eqref{Hass}-\eqref{Hass3}. In particular, we can take
$G(r)=(r-r_0)_+^q.$ 
 \end{prop}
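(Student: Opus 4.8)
The strategy is to reduce the statement to the prototypical case $H(r)=r^p$, which is handled in Appendix~\ref{apA}, by perturbing the associated family of approximants with a fixed correction built from $G$. Since $G$ is supported in $[r_0,\infty)$ with $r_0>0$, the function $g$ defined by $g(s):=G(\sqrt s)$ for $s\geq 0$ vanishes identically on $[0,r_0^2]$ and coincides on $(r_0^2,\infty)$ with a composition of smooth functions; hence $g$ is as smooth as $G$ on all of $[0,\infty)$, with $g(0)=0$. I would then set
\[
\Phim(s):=(s+\eta^2)^{p/2}-\eta^p+g(s),\qquad s\geq 0,\quad \eta>0,
\]
which is exactly the prototypical approximant of $r^p$ plus the $\eta$-independent term $g$. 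Note first that the hypothesis forces $G\geq 0$: the support of $G$ being a closed subset of $[r_0,\infty)$ gives $G(r_0)=0$, and then $\big(G(r)/r\big)'\geq\lambda r^{q-2}>0$ on $(r_0,\infty)$ yields $G(r)/r>0$ there.

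The basic assumptions \eqref{Hass} are then straightforward. The function $H(r)=r^p+G(r)$ is continuous and nonnegative on $[0,\infty)$, vanishes at $r=0$, and is locally Lipschitz on $(0,\infty)$ since both summands are. Each $\Phim$ is smooth on $[0,\infty)$ (the first two terms because $s+\eta^2\geq\eta^2>0$, the last because $g$ is), is nonnegative because $(s+\eta^2)^{p/2}-\eta^p\geq 0$ and $g\geq 0$, satisfies $\Phim(0)=\eta^p-\eta^p+g(0)=0$, and obeys $\Phim(r^2)=(r^2+\eta^2)^{p/2}-\eta^p+G(r)\to r^p+G(r)=H(r)$ as $\eta\to 0^+$, uniformly on compact subsets of $[0,\infty)$.

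The main point is \eqref{Hass3}. Exploiting the linearity of $\Phi\mapsto 2s\Phi'-\Phi$, I would split $\Thetam=\Thetam^{(0)}+\Thetam^{(G)}$, where $\Thetam^{(0)}$ is the function attached to the approximant $(s+\eta^2)^{p/2}-\eta^p$ of $r^p$ and $\Thetam^{(G)}(s)=2sg'(s)-g(s)$. Appendix~\ref{apA} provides $\Thetam^{(0)}(s)\geq a\,s^{p/2}-b\,\eta^\gamma$ for suitable constants $a,b,\gamma>0$ and all sufficiently small $\eta$. For the correction, the chain rule gives $2sg'(s)=\sqrt s\,G'(\sqrt s)$, so that, writing $\rho=\sqrt s$,
\[
\Thetam^{(G)}(s)=\rho\,G'(\rho)-G(\rho)=\rho^2\,\frac{d}{d\rho}\!\left(\frac{G(\rho)}{\rho}\right),
\]
which vanishes for $\rho\leq r_0$ (where $G\equiv 0$) and is bounded below by $\lambda\rho^q=\lambda s^{q/2}\geq 0$ for $\rho>r_0$ by hypothesis. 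Thus $\Thetam^{(G)}\geq 0$ everywhere, whence $\Thetam(s)\geq a\,s^{p/2}-b\,\eta^\gamma$, i.e. $H$ satisfies the $p$-condition~(i); observe that only the nonnegativity of $\Thetam^{(G)}$ is needed.

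For the explicit choice $G(r)=(r-r_0)_+^q$ with $q\geq p$, one checks at once that it is supported in $[r_0,\infty)$ and that, for $r>r_0$,
\[
\frac{d}{dr}\!\left(\frac{(r-r_0)^q}{r}\right)=\frac{(r-r_0)^{q-1}\big((q-1)r+r_0\big)}{r^2}\geq 0,
\]
a quantity which behaves like $(q-1)\,r^{q-2}$ as $r\to\infty$; since only the nonnegativity of $\Thetam^{(G)}$ is used in the argument above, the construction applies. The only genuine obstacle I foresee is the bookkeeping behind the reduction: confirming that $s\mapsto G(\sqrt s)$ is smooth up to $s=0$ — which is precisely why $G$ is required to be supported away from the origin — and spotting the identity $2sg'(s)-g(s)=\rho^2\big(G(\rho)/\rho\big)'\big|_{\rho=\sqrt s}$, which pins down exactly the monotonicity placed on $G$. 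Once these are settled, everything follows from the already-established case $H(r)=r^p$.
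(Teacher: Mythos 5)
Your proof is correct and follows essentially the same route as the paper: you split $H$ into $r^p$ plus $G$, take the $G$-part of the approximant to be $\Phim(r^2)=G(r)$ itself, and use the identity $\sqrt{s}\,G'(\sqrt{s})-G(\sqrt{s})=\rho^2\,\frac{d}{d\rho}\left(\frac{G(\rho)}{\rho}\right)\Big|_{\rho=\sqrt{s}}$ together with the hypothesis, exactly as in the paper's one-line computation. The only differences are presentational: you verify \eqref{Hass} explicitly (nonnegativity of $G$, smoothness of $s\mapsto G(\sqrt{s})$ at $s=0$) and note, somewhat more carefully than the paper, that for the example $G(r)=(r-r_0)_+^q$ only the nonnegativity of the corresponding $\Thetam$-contribution is actually needed.
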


\begin{proof}
It suffices, by the above remarks, to consider only the part of
$G$. By taking $\Phim(r^2)\equiv G(r)$ we get 
$$\Thetam(r)=2r\Phim'(r)-\Phim(r)=\sqrt{r}G'(\sqrt{r})-G(\sqrt{r})\geq
\lambda (\sqrt{r})^q\geq \lambda\ r_0^{\frac{q-p}{2}}\
r^{\frac{p}{2}}.$$ 
\end{proof}

From now on, we assume that all special Hamiltonians $H$ satisfy a
$p$-condition for some $p\in (0,1)\cup (1,\infty)$. Our initial
function $\varphi$ is assumed only to be bounded from below and can be taken in $C(\mathbb{R}^N)$, the  space of real-valued continuous functions on  $\mathbb{R}^N$ if $1<p<\infty$ whereas, if $0<p<1$ , it
is taken in $C_b(\mathbb{R}^N)$, the space of \textit{bounded}
continuous  functions . 

Under these conditions we obtain the existence and uniqueness of a solution to \eqref{HJ}-\eqref{initial} in $\mathbb{R}^N \times
[0,\infty)$ in Theorems~\ref{bounded} and~\ref{general}, provided a suitable comparison principle is available in the following sense. 

\begin{defn}\label{comparison} (a) We say that Equation~\eqref{HJ}
satisfies the \textbf{(discontinuous) comparison principle} if the
following condition holds: 
  Let $v_1\in USC(\QQ)$ (resp. $v_2\in LSC(\QQ)$) be a viscosity
subsolution  (resp. supersolution) of ~\eqref{HJ}. Assume that
$v_1(x,0)\leq v_2(x,0)$ for $x\in\mathbb{R}^N$ and that 
$\inf_{\QQ} v_2>-\infty.$ Then $v_1\leq v_2$ in $\QQ.$

(b) We say that Equation~\eqref{HJ} satisfies the \textbf{comparison
principle in $C_b(\mathbb{R}^N)$} if the following condition holds: 
  Let $v_1\in C_b(\QQ)$ (resp. $v_2\in C_b(\QQ)$) be a viscosity
subsolution  (resp. supersolution) of ~\eqref{HJ}. Assume that
$v_1(x,0)\leq v_2(x,0)$ for $x\in\mathbb{R}^N$. Then $v_1\leq v_2$ in
$\QQ.$ 
\end{defn}

Here, $USC(\QQ)$ and $LSC(\QQ)$ denote the space of upper and lower semicontinuous functions in $\QQ$, respectively. We refer to
\cite{Is97} for conditions that imply the (discontinuous)
comparison principle. For instance, if $H$ is convex, Equation~\eqref{HJ} satisfies the (discontinuous) comparison principle. This applies in particular to $H(\xi) = |\xi|^p$ for $p>1$. Concerning the case $H(\xi)=|\xi|^p$ for $p\in (0,1)$, Equation~\eqref{HJ} satisfies
the comparison principle in $C_b(\mathbb{R}^N)$ as recalled in
Appendix~\ref{apC} \cite{psbarles}. 

While the comparison principle seems to be the most effective tool in the study of uniqueness (for equations of the type considered here) we mention the proof in \cite{St02} concerning the uniqueness of the solution obtained by the Lax-Hopf formula.
 

\section{\textbf {Notation}}

Throughout the paper, we shall make use of the following standard functional notation.

The space $C^{2,1}(\mathbb{R}^N\times(0,\infty))$ is the space of all functions $u=u(x,t) $ which are twice continuously differentiable with respect to  $x\in\mathbb{R}^N$ and once with respect to $t\in (0,\infty)$.

The space $C^2_b(\mathbb{R}^N)$  is the space of all twice continuously differentiable functions $f$ such that all their derivatives up to second order are bounded (i.e., in $C_b(\mathbb{R}^N)$).

The space $W^{1,\infty}(\mathbb{R}^N)$ is the space of functions having uniformly bounded (distributional)  first order derivatives (i.e., using Rademacher's theorem, uniformly Lipschitz continuous functions).

The norm in $L^q(\mathbb{R}^N)$ is denoted by $\|\cdot\|_q, \quad q \in [1, \infty]$.


\section{{\textbf {Results}}}

The existence and uniqueness results for solutions  to ~\eqref{HJV}-\eqref{init} are recalled in Proposition~\ref{basic} below. When the initial function is bounded, these solutions converge to a viscosity solution to \eqref{HJ}-\eqref{initial}, as expressed in the following theorem.

\begin{thm} \label{bounded}
Let $p \in (0,\infty), \quad p \neq 1$, and let $ \varphi \in
C_b(\mathbb{R}^N)$. Assume that $H$ satisfies the hypotheses
\eqref{Hass} and \eqref{Hass3} and that Equation~\eqref{HJ} satisfies
the comparison principle in $C_b(\mathbb{R}^N)$
(cf. Definition~\ref{comparison}~(b)). Assume also that $H$ satisfies
the growth condition 
\begin{equation}
 \label{Hass4}
\quad H(r)\leq\widetilde{H}(r):= g_H\
(r^{\kappa_\infty}+r^{\kappa_0}),\quad 0<r<\infty,  \;\;
0<\kappa_\infty\leq\kappa_0, 
\end{equation}
where $g_H>0 $ is a constant.
Then the solutions $\veps$ to  (\ref{HJV})-(\ref{init}) converge as $\varepsilon\to 0$ towards the unique (viscosity) solution $v$ to (\ref{HJ})-(\ref{initial}), uniformly in every compact subset of $\QQ$.
The function $v$ is differentiable a.e. in $\QQ$ and satisfies
(\ref{HJ}) at any  point of differentiability. Furthermore, we have,
for a.e. $(x,t) \in \QQ$, 
 \begin{equation}
|\nabla_x v(x,t)| \leq \lambda_p \|\varphi\|_{\infty}^{\frac{1}{p}} \,
(at)^{-\frac{1}{p}}  \, ,\label{vgradx} 
\end{equation}
\begin{equation}
\label{vdt}
0 \geq \partial_t v(x,t) \geq -L\ t^{-\mu},
\end{equation}
where $\lambda_p=1$ if $p>1$ and $\lambda_p=(2/p)^{\frac{1}{p}}$ if $p<1$, 
$$
\mu=\left\{
\begin{array}{lcl}
\displaystyle{\frac{\kappa_0}{p}} & \quad \text{if} \quad & 0<t\leq 1,\\
& & \\
\displaystyle{\frac{\kappa_\infty}{p}} & \quad \text{if}\quad & 1<t<\infty,
\end{array}
\right.
$$
and
$$
L=g_H\ \left\{ \left( 2 \lambda_p^p \|\varphi\|_{\infty} a^{-1}
\right)^{\frac{\kappa_0}{p}} + \left( 2 \lambda_p^p
\|\varphi\|_{\infty} a^{-1} \right)^{\frac{\kappa_\infty}{p}}
\right\}. 
$$
\end{thm}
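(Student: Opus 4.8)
\textbf{Proof plan for Theorem~\ref{bounded}.}

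The plan is to obtain all the stated estimates first at the level of the regularized problems~\eqref{HJV}--\eqref{init} with bounds that are uniform in $\varepsilon$, and then pass to the limit $\varepsilon \to 0$. The starting point is Proposition~\ref{basic}, which provides, for each $\varepsilon>0$, a (smooth) solution $\veps$ together with its basic properties; in particular $\|\veps(\cdot,t)\|_\infty \le \|\varphi\|_\infty$ by the maximum principle, and $\partial_t \veps \le 0$ since $H \ge 0$ and $\varphi$ generates a nonincreasing-in-time evolution (comparing $\veps(x,t)$ with $\veps(x,t+h)$). The crucial quantitative input is the gradient bound $|\nabla \veps(x,t)| \le \lambda_p \|\varphi\|_\infty^{1/p} (at)^{-1/p}$, uniform in $\varepsilon$. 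This is precisely where the $p$-condition (Definition~\ref{pcondi}) and the approximating family $\{\Phim\}$ enter: one applies a Bernstein-type argument to a regularized equation in which $H(|\nabla \veps|)$ is replaced, via $\Phim$, by a smooth function of $|\nabla \veps|^2$, computes the equation satisfied by $w = |\nabla \veps^\eta|^2$ (or by a suitable function $\Psi(t)\,w$), and uses the sign information encoded in $\Thetam(r) \ge ar^{p/2} - b\eta^\gamma$ (for $p>1$; the reversed inequality for $p<1$) to absorb the bad terms. A comparison of $w$ with a spatially constant supersolution of the form $C t^{-2/p}$ then yields the bound, and letting $\eta \to 0$ removes the regularization using~\eqref{Hass}(ii) and the $\eta^\gamma$-error term, which vanishes in the limit. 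Once $|\nabla \veps|$ is controlled, the growth hypothesis~\eqref{Hass4} gives
\[
0 \ge \partial_t \veps = \varepsilon \Delta \veps - H(|\nabla \veps|) \ge - \widetilde H\big( \lambda_p \|\varphi\|_\infty^{1/p} (at)^{-1/p} \big) = - g_H \big( (\cdots)^{\kappa_\infty} + (\cdots)^{\kappa_0} \big),
\]
but since $\partial_t \veps$ is not directly bounded this way (the Laplacian term has the wrong sign), one instead bounds $\partial_t \veps$ by differentiating the equation in $t$ and running a second maximum-principle/Bernstein argument on $\partial_t \veps$, or more simply by using that $\partial_t \veps \ge -H(|\nabla \veps|)$ holds in the viscosity sense after passing to the limit; combining with the gradient estimate and separating the regimes $t \le 1$ and $t > 1$ (hence the two exponents $\kappa_0, \kappa_\infty$) produces~\eqref{vdt} with the stated $\mu$ and $L$.

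With these uniform estimates in hand, the passage to the limit is largely routine: the family $(\veps)_{\varepsilon>0}$ is bounded in $L^\infty(\QQ)$ and, by the gradient and time-derivative bounds, equi-Lipschitz on every compact subset of $\QQ$, so by Arzel\`a--Ascoli a subsequence converges locally uniformly to some $v$. The stability theory for viscosity solutions (Crandall--Lions, \cite{CL83}; see also \cite{BCD97,Ba94}) ensures that $v$ is a viscosity solution of~\eqref{HJ}, and the initial condition $v(\cdot,0)=\varphi$ is recovered from a uniform modulus of continuity near $t=0$ (again provided by the $\varepsilon$-uniform gradient bound together with the equation). Uniqueness of $v$ follows from the assumed comparison principle in $C_b(\mathbb{R}^N)$ (Definition~\ref{comparison}(b)), which in particular forces the whole family $(\veps)$ — not merely a subsequence — to converge. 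Finally, $v$ inherits the estimates~\eqref{vgradx}--\eqref{vdt} in the a.e.\ sense because local uniform limits of functions with a common Lipschitz constant on compacta are themselves Lipschitz with that constant, hence differentiable a.e.\ by Rademacher's theorem, and~\eqref{HJ} holds at points of differentiability by the standard fact that a locally Lipschitz viscosity solution solves the equation a.e.

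The main obstacle is the uniform-in-$\varepsilon$ gradient estimate~\eqref{vgradx}, i.e.\ the Bernstein computation on the $\eta$-regularized equation. The delicate points there are: (i) choosing the right auxiliary quantity (a time weight $t^{2/p}|\nabla\veta|^2$, or an additive barrier) so that the differential inequality closes; (ii) verifying that the ``good'' term coming from $\Thetam$ genuinely dominates the cross terms produced by differentiating $\Phim(|\nabla\veta|^2)$ and by the viscosity term $\varepsilon\Delta$, \emph{with constants independent of both $\varepsilon$ and $\eta$}; and (iii) handling the two opposite sign conventions for $p>1$ versus $p<1$, which require genuinely different barriers and explain the appearance of the constant $\lambda_p$. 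The error term $b\eta^\gamma$ must be carried through the maximum-principle argument and shown to contribute only a vanishing correction as $\eta\to 0$, which is where the positivity $\gamma>0$ is used.
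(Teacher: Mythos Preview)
Your overall architecture matches the paper's, but there are two concrete problems.

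First, the claim that $\partial_t\veps\le 0$ for the \emph{viscous} equation is false. At a local minimum of $\varphi$ (say $\varphi$ smooth), $\nabla\varphi=0$ and $\Delta\varphi\ge 0$, so $\partial_t\veps|_{t=0}=\varepsilon\Delta\varphi-H(0)=\varepsilon\Delta\varphi\ge 0$. The paper's estimate \eqref{dudtpl} reflects this: the upper bound on $\partial_t\veps$ is a \emph{positive} quantity of order $\varepsilon^{1/2}$ that only vanishes in the limit. Time monotonicity $\partial_t v\le 0$ is a property of the \emph{limit} $v$ (since $\partial_t v=-H(|\nabla v|)$ a.e.), not of $\veps$.

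Second, and more seriously, your recovery of the initial condition has a genuine gap. You invoke ``a uniform modulus of continuity near $t=0$ provided by the $\varepsilon$-uniform gradient bound together with the equation'', but the gradient bound \eqref{gradx} blows up like $t^{-1/p}$ as $t\downarrow 0$, and the resulting time-H\"older estimate (cf.\ Proposition~\ref{pim}) inherits this blow-up. For merely continuous $\varphi\in C_b(\mathbb{R}^N)$ there is no $\varepsilon$-uniform modulus across $t=0$. The paper handles this quite differently: it first uses the monotonicity $\partial_t v\le 0$ of the \emph{limit} to define $v_0(x)=\lim_{t\downarrow 0}v(x,t)$ as a monotone limit; then it shows $v_0\le\varphi$ by testing \eqref{HJV} against nonnegative $\psi\in C_0^\infty$, discarding the (nonnegative) $H$-term, and letting $\varepsilon\to 0$; finally it shows $v_0\ge\varphi$ by squeezing from below with solutions launched from smooth compactly supported $\psi_\eta\le\varphi$ (for which the initial trace \emph{is} easily identified via the uniform gradient bound $\|\nabla\veps\|_\infty\le\|\nabla\psi_\eta\|_\infty$), and invoking the comparison principle. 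Dini's theorem then upgrades the pointwise monotone convergence to locally uniform convergence. This two-sided squeeze is the substantive content of the proof and is missing from your plan.

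A minor point: in the Bernstein step the paper does not work with $w=|\nabla\veta|^2$ directly but with $w=|\nabla u|^2$ after the substitution $V=f(u)$, where $f(r)=r^{p/(p-1)}$ for $p>1$ and $f(r)=2\eta^{\gamma/2}+\|\varphi\|_\infty-\tfrac12 r^2$ for $p<1$. This change of dependent variable is what makes the $\Thetam$-term appear with the right sign and produces the $\varepsilon$-independent constant; a bare Bernstein argument on $|\nabla\veta|^2$ would not close. Your phrase ``a suitable function'' leaves this unspecified.
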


As our aim in this paper is to derive estimates for the solutions $\veps$ to \eqref{HJV}-\eqref{init} (almost) independent of $\varepsilon$, the estimate \eqref{vdt} is obtained by passing to the limit as $\varepsilon\to 0$ in an analogous estimate for $\veps$ (see Proposition~\ref{bernstein} below). However, an alternative and simpler proof (with a slightly better constant than $L$) relies on \eqref{Hass4}, \eqref{vgradx}, and the fact that $v$ solves \eqref{HJ}-\eqref{initial} almost everywhere. Indeed, we infer from these properties that
\begin{eqnarray*}
\partial_t v & = & - H(|\nabla_x v|) \ge - g_H\ \left( |\nabla_x v|^{\kappa_\infty} + |\nabla_x v|^{\kappa_0} \right) \\
& \ge & - g_H\ \left\{ \left( \lambda_p^p \|\varphi\|_{\infty} a^{-1} \right)^{\frac{\kappa_0}{p}} + \left( \lambda_p^p
\|\varphi\|_{\infty} a^{-1} \right)^{\frac{\kappa_\infty}{p}} \right\}\ t^{-\mu} ,
\end{eqnarray*}
the parameter $\mu$ being defined in Theorem~\ref{bounded}. A further comment in that direction is that the vanishing viscosity approach used here (and already used in \cite{Li85}) is not the only route towards gradient or time derivative estimates, see, e.g., \cite{Ba90,Ba90b,Ka95,Li82}.

\begin{rem}\label{roptim}
If $H(r)=r^p$ for $p>1$, we have $\kappa_\infty=\kappa_0=p$ and
\eqref{vdt} indicates that $\partial_t v \ge -C/t$ for $t\ge 1$ for
some positive constant $C$ depending on $N$, $p$ and
$\|\varphi\|_\infty$. It gives a temporal decay rate for large times
of the same order as that obtained in \cite{BC81} where the inequality
$\partial_t v \ge -v/((p-1)t)$ (in the sense of distributions) is
established by using the homogeneity of the Hamiltonian $H$.  
\end{rem}

We now turn to the case where the initial function $\varphi$ is
continuous but not necessarily bounded. Thus, in contrast to the
previous theorem , where the positivity of $\varphi$ was not essential
(as $\varphi$ could be replaced by $\varphi+c$), the positivity
assumption (or rather the requirement that $\varphi$ be bounded from
below) in the following theorem is essential. Also, we need to impose
an additional growth assumption on $H$, namely that $H$ fulfills the
$p$-condition \eqref{Hass3} with $p>1$. 

\begin{thm}
\label {general}
Let $0 \leq \varphi \in C(\mathbb{R}^N)$ and assume  that $H$
satisfies the hypotheses~\eqref{Hass} and~\eqref{Hass3}  with $p>1$,
together with \eqref{Hass4}. Assume also that Equation~\eqref{HJ}
satisfies the (discontinuous) comparison principle
(cf. Definition~\ref{comparison}~(a)). Then the solutions $\veps$ to  (\ref{HJV})-(\ref{init}) converge as $\varepsilon\to 0$ towards the unique (viscosity) solution $v$ to (\ref{HJ})-(\ref{initial}), uniformly in every compact subset of $\QQ$. The function $v$ belongs to $W^{1,\infty}_{loc}(\QQ)$ and satisfies (\ref{HJ}) as well as \eqref{vgradx}, \eqref{vdt} for a.e. $(x,t) \in \QQ$. 
\end{thm}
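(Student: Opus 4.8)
Here is how I would prove Theorem~\ref{general}. The plan is to reduce to Theorem~\ref{bounded} by truncating $\varphi$ from above, and to make the truncation invisible on compact sets by exploiting that the $p$-condition \eqref{Hass3} with $p>1$ forces $H$ to grow superlinearly. Set $\varphi_n:=\min\{\varphi,n\}$ for $n\ge1$; these functions belong to $C_b(\mathbb{R}^N)$, are nonnegative and nondecreasing, and converge to $\varphi$ locally uniformly (Dini). Let $\vepsn$ solve \eqref{HJV}--\eqref{init} with datum $\varphi_n$ (Proposition~\ref{basic}), and let $v_n$ be the viscosity solution to \eqref{HJ}--\eqref{initial} with datum $\varphi_n$ given by Theorem~\ref{bounded}; recall $\vepsn\to v_n$ locally uniformly as $\varepsilon\to0$, that $v_n\in W^{1,\infty}_{loc}(\QQ)$, and that $v_n$ satisfies \eqref{HJ} a.e.\ and \eqref{vgradx}--\eqref{vdt} with $\|\varphi_n\|_\infty\le n$ in place of $\|\varphi\|_\infty$. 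Since the constant $0$ solves both equations and $0\le\varphi_n\le\varphi_{n+1}\le\varphi$, the comparison principles (Proposition~\ref{basic} for \eqref{HJV}, Definition~\ref{comparison}(a) for \eqref{HJ}) give $0\le\vepsn\le v_{\varepsilon,n+1}\le\veps$ and $0\le v_n\le v_{n+1}$. Put $v:=\sup_n v_n=\lim_n v_n$; being an increasing limit of continuous viscosity supersolutions, $v$ is lower semicontinuous and is itself a viscosity supersolution of \eqref{HJ}.

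To show $v$ is finite and to control it I would use superlinearity. Writing $g_\eta(s):=\Phim(s^2)$, the $p$-condition gives $s g_\eta'(s)-g_\eta(s)=\Thetam(s^2)\ge as^p-b\eta^\gamma$, hence $\frac{d}{ds}\bigl(g_\eta(s)/s\bigr)\ge as^{p-2}-b\eta^\gamma s^{-2}$; integrating over $[1,s]$ and letting $\eta\to0$ (using \eqref{Hass}) yields $H(r)\ge c_*(r^p-r)$ for all $r\ge0$, with $c_*=a/(p-1)$. Using this I would build, for each $x_0\in\mathbb{R}^N$ and $R>0$, a barrier
\[
W_{x_0,R}(x,t)=\sup_{\overline{B_R(x_0)}}\varphi+C_0\,t+\delta\,t^{-1/(p-1)}\bigl(|x-x_0|-R\bigr)_+^{p/(p-1)} ,
\]
where the exponents $p/(p-1)$ and $1/(p-1)$ are tuned so that the leading $r^p$--part of $H(|\nabla W|)$ balances $\partial_t W$, the affine term $C_0 t$ absorbs the remaining lower--order errors (including, after adding an $O(\varepsilon)$ correction, the term $-\varepsilon\Delta W$ for $\varepsilon$ small), and $\delta,C_0$ depend only on $a$ and $p$. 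This $W_{x_0,R}$ is a supersolution of \eqref{HJ} (resp.\ of \eqref{HJV}) that is bounded from below and satisfies $W_{x_0,R}(\cdot,0^+)\ge\varphi\ge\varphi_n$ on $B_R(x_0)$ and $+\infty$ off $\overline{B_R(x_0)}$; comparison then gives, uniformly in $n$,
\[
0\le v_n(x_0,t)\le\sup_{\overline{B_R(x_0)}}\varphi+C_0\,t ,\qquad (x_0,t)\in\QQ,\ R>0 ,
\]
and an $O(\varepsilon)$--perturbation of the same bound for $\veps$. In particular $v$ is finite, $(\vepsn)_{\varepsilon,n}$ and $(\veps)$ are locally bounded uniformly in $\varepsilon$ and $n$, and letting $R\to0$ then $t\to0$ shows that $v$ and all $\veps$ attain $\varphi$ as initial datum.

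It remains to bound $\nabla_x v_n$ on compact subsets of $\QQ$ uniformly in $n$. I would obtain this from a \emph{localized} version of the Bernstein estimate for $\vepsn$, uniform in $\varepsilon$, of the type behind Proposition~\ref{bernstein}: inserting a spatial cut--off and using the $p$-condition to control the sign of the nonlinear term, one bounds $\sup_{B_r(x_0)\times\{t\}}|\nabla\vepsn|$ by a quantity depending only on $r$, $t$ and $\sup_{B_{2r}(x_0)\times(0,T)}\vepsn$, which by the previous step is uniformly controlled (alternatively, a finite--speed--of--propagation property for \eqref{HJ}, proved by a cone comparison again using $H(r)\gtrsim r^p$, shows that $v_n$ coincides on a given compact set with a fixed $v_{n_0}$ once $n$ is large). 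Passing $\varepsilon\to0$, $v_n\in W^{1,\infty}_{loc}(\QQ)$ with a gradient bound independent of $n$, whence $v=\sup_n v_n\in W^{1,\infty}_{loc}(\QQ)$, satisfies \eqref{HJ} a.e.\ and \eqref{vdt}, and satisfies \eqref{vgradx} (vacuously when $\|\varphi\|_\infty=+\infty$, with the relevant local supremum of $\varphi$ otherwise). Finally, $\veps\ge\vepsn$ together with $\vepsn\to v_n$ gives that the lower half--relaxed limit of $(\veps)$ as $\varepsilon\to0$ dominates $v_n$ for every $n$, hence dominates $v$; on the other hand, $(\veps)$ being locally bounded, its upper half--relaxed limit is a viscosity subsolution of \eqref{HJ} (standard stability, the term $-\varepsilon\Delta$ disappearing) attaining $\varphi$ at $t=0$, hence is $\le v$ by the comparison principle of Definition~\ref{comparison}(a). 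Consequently $\veps\to v$ locally uniformly, and $v$ is the unique viscosity solution to \eqref{HJ}--\eqref{initial} bounded from below.

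The main obstacle is precisely the content of the last two paragraphs together with the $\varepsilon$-uniformity throughout: producing bounds on $v_n$ and on $|\nabla_x v_n|$ (and their $\vepsn$-counterparts) that are uniform both in $\varepsilon$ and in the truncation level $n$. This is exactly where $p>1$ is indispensable — the superlinear growth $H(r)\gtrsim r^p$ is what erases the behaviour of $\varphi$ at infinity on compact sets, through the localized Bernstein estimate or through finite speed of propagation — while keeping every constant independent of $\varepsilon$, in particular handling the term $-\varepsilon\Delta$ in the barriers and in the Bernstein computation, is the whole point of the vanishing--viscosity scheme used in the paper.
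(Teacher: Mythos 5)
Your skeleton is the same as the paper's: truncate $\varphi_n=\min\{\varphi,n\}$, apply Theorem~\ref{bounded} to each $\varphi_n$, establish local bounds uniform in $n$ and $\varepsilon$, and pass to the limit. Your route to the local $L^\infty$ bound (an explicit barrier built from $H(r)\ge c_*(r^p-r)$) is a legitimate alternative to the paper's, which instead combines the local integral estimate of Lemma~\ref{influence} with a Harnack-type consequence of \eqref{gradxind}; one caveat is that your barrier $(|x-x_0|-R)_+^{p/(p-1)}$ has an unbounded Laplacian near the kink when $p>2$ (since $p/(p-1)<2$), so the claimed ``$O(\varepsilon)$ correction'' handling $-\varepsilon\Delta W$ needs an actual regularization of the profile, not just a constant shift.

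The genuine gap is in the step you yourself identify as the main obstacle: the local gradient bound uniform in $n$. You defer it to a ``localized Bernstein estimate with a spatial cut-off'' or to ``finite speed of propagation,'' neither of which you carry out, and both of which are substantial (the cut-off interacts with the $\varepsilon\Delta$ term and with the good term $aw^{1+p/2}$ in a way that must be tracked uniformly in $\varepsilon$; the domain-of-dependence argument as stated would need a convexity or representation-formula input not available under \eqref{Hass3} alone). The paper needs none of this, because the estimate \eqref{gradxind}, $\|\nabla_x(\vepsn^{(p-1)/p})(\cdot,t)\|_\infty\le\mu_p t^{-1/p}$, is already \emph{global and independent of the initial data} (this is emphasized right after Proposition~\ref{bernstein} precisely because it is the engine of Theorem~\ref{general}). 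Hence $|\nabla\vepsn|\le \frac{p\mu_p}{p-1}\,\vepsn^{1/p}\,t^{-1/p}$ pointwise, and the uniform local gradient bound follows instantly from the uniform local $L^\infty$ bound; the same inequality, read as $\vepsn(y,t)\le C[\vepsn(x,t)+t^{-1/(p-1)}|x-y|^{p/(p-1)}]$ and integrated against Lemma~\ref{influence}, is also what produces the local $L^\infty$ bound in the first place. Without importing \eqref{gradxind} (or completing the localized Bernstein argument), your proof does not yet establish $v\in W^{1,\infty}_{loc}(\QQ)$, which is exactly the conclusion that lets one say $v$ solves \eqref{HJ} a.e.\ and satisfies \eqref{vgradx}--\eqref{vdt}.
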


\begin{rem}
\label{powersum}
When dealing with the Hamilton-Jacobi equation ~\eqref{HJ} it seems
unavoidable (as is the case in the references cited in this paper) to
have a rather long list of assumptions on the Hamiltonian
$H$. Furthermore, some results depend only on partial lists of the
assumptions, in addition to the interplay between the degree of
generality assumed on the initial data (and solutions) and the
corresponding assumptions. We therefore emphasize that
Theorem~\ref{bounded} is applicable to the case of sums of powers as
in ~\eqref{specialpl}, while Theorem ~\ref{general} is applicable to
the case ~\eqref{specialpl} when $\set{p_1,...,p_m}\in (1,\infty)^m.$ 
\end{rem}

\begin{rem}
Using a condition similar to our assumption ~\eqref{Hass3} Lions
\cite[Section~IV]{Li85} obtains viscosity solutions for bounded,
lower semicontinuous initial data. 
\end{rem}


\section {\textbf{The viscous Hamilton-Jacobi equation}}
    
We first draw useful consequences of \eqref{Hass3} and \eqref{Hass4} on $H$. 

\begin{lem}
\label{lepropH}
Assume that $H$ fulfills \eqref{Hass} and \eqref{Hass3}. Then
\begin{equation}
\label{Hass5}
H(r)\geq \frac{a}{|p-1|}\ r^p,\quad r\geq 0.
\end{equation}
If $H$ also satisfies \eqref{Hass4} then $\kappa_\infty \le p \le \kappa_0$.
\end{lem}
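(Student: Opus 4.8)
The plan is to extract the pointwise lower bound \eqref{Hass5} directly from the $p$-condition by unravelling the definition of $\Thetam$ and integrating an ordinary differential inequality. First I would fix a small $\eta>0$ for which the $p$-condition holds and observe that, since $\Phim$ is smooth and nonnegative with $\Phim(0)=0$, the quantity $\Thetam(r) = 2r\Phim'(r)-\Phim(r)$ is exactly $r^2\,\frac{d}{dr}\!\left(\Phim(r)/r\right)\cdot\frac{1}{r}$ — more precisely, writing $s = r^{1/2}$ one has that $s \mapsto \Phim(s^2)/s$ has derivative proportional to $\Thetam(s^2)/s^2$, so the sign of $\Thetam$ controls the monotonicity of $r\mapsto \Phim(r^2)/r$. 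This is the standard trick for recovering homogeneity-type lower bounds from a Bernstein-type differential condition.

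The cleanest route is to work with the function $g(r) := \Phim(r^2)$ for $r\ge 0$ (so that $g(r)\to H(r)$ locally uniformly as $\eta\to0^+$). Then $g'(r) = 2r\,\Phim'(r^2)$, hence $\Thetam(r^2) = r g'(r)/1 \cdot \text{(something)}$; concretely $2r^2\Phim'(r^2) - \Phim(r^2) = r\,g'(r)/1$? Let me just record the identity I will use: with $g(r)=\Phim(r^2)$ we get $r g'(r) - p\, g(r) = r g'(r) - p g(r)$, and I want to relate $\frac{d}{dr}\big(g(r) r^{-p}\big) = r^{-p-1}\big(r g'(r) - p g(r)\big)$ to $\Thetam(r^2)$. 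Since $r g'(r) = 2r^2 \Phim'(r^2)$ and $\Thetam(r^2) = 2 r^2\Phim'(r^2) - \Phim(r^2)$, we obtain $r g'(r) - p\, g(r) = \Thetam(r^2) + (1-p)\,\Phim(r^2) = \Thetam(r^2) + (1-p) g(r)$. So when $p>1$, using $\Thetam(r^2)\ge a r^p - b\eta^\gamma$ and $g\ge 0$ one gets $\frac{d}{dr}\big(g(r) r^{-p}\big) \ge r^{-p-1}\big(a r^p - b\eta^\gamma + (1-p) g(r)\big)$, which is not yet a clean ODE because of the $(1-p)g$ term; I would instead integrate the inequality $\frac{d}{dr}\big(g(r) r^{1-p}\big) = r^{1-p}\big((1-p) g(r)/r + g'(r)\big) = r^{-p}\big((1-p) g(r) + r g'(r)\big) = r^{-p}\Thetam(r^2)$ — now this is clean: the $(1-p)g$ cancels exactly. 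Thus $\frac{d}{dr}\big(g(r) r^{1-p}\big) = r^{-p}\,\Thetam(r^2) \ge a r^{-p/2}\cdot r^{\,?}$ — wait, $\Thetam$ is evaluated at $r^2$, so $\Thetam(r^2)\ge a (r^2)^{p/2} - b\eta^\gamma = a r^p - b\eta^\gamma$, giving $\frac{d}{dr}\big(g(r) r^{1-p}\big) \ge a - b\eta^\gamma r^{-p}$.

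Integrating this from a small $\rho>0$ to $r$, and using $g(\rho)\rho^{1-p}\ge 0$ together with the fact that $\int_\rho^r b\eta^\gamma s^{-p}\,ds$ stays controlled (here I must be a little careful about whether $p<1$ or $p>1$ affects the integrability of $s^{-p}$ at $0$; for the $p>1$ half of the statement, $s^{-p}$ is not integrable at $0$, so I would instead integrate from a fixed $\rho>0$ and then let $\eta\to0^+$ first, killing the $b\eta^\gamma$ term, and only afterwards let $\rho\to0^+$), I arrive at $g(r) r^{1-p} \ge a (r-\rho) - (\text{error in }\eta)$. Passing to the limit $\eta\to 0$ gives $H(r) r^{1-p}\ge a(r-\rho)$ for every $\rho\in(0,r)$, hence $H(r)\ge a r^p$ when $p>1$; the case $p<1$ is entirely symmetric with the inequalities reversed and $g\ge0$ used in the other direction, and one checks the constant comes out as $a/|p-1|$ rather than $a$ — this discrepancy suggests I should not integrate $g(r)r^{1-p}$ but rather redo the computation keeping the $(1-p)g$ term and solving the resulting linear ODE inequality $\frac{d}{dr}\big(g r^{-p}\big)\ge r^{-p-1}\Thetam(r^2) + (1-p) g r^{-p-1}$ via an integrating factor, which naturally produces the $1/|p-1|$ factor. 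The main obstacle is precisely this bookkeeping: choosing the right power of $r$ as integrating factor, ordering the limits $\eta\to0$ and $\rho\to0$ correctly so the $b\eta^\gamma$ error term is harmless despite the non-integrable singularity of $r^{-p}$, and matching the constant $a/|p-1|$. The second assertion, $\kappa_\infty\le p\le\kappa_0$, is then immediate: comparing $a r^p/|p-1| \le H(r) \le g_H(r^{\kappa_\infty}+r^{\kappa_0})$ as $r\to\infty$ forces $p\le\kappa_0$ (the larger exponent must dominate), and as $r\to0^+$ forces $p\ge\kappa_\infty$ (the smaller exponent must dominate), since otherwise one side would overwhelm the other in the respective limit.
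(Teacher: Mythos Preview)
Your instinct is right and you even state the key identity at the outset: with $g(s)=\Phim(s^2)$ one has
\[
\frac{d}{ds}\Big(\frac{g(s)}{s}\Big)=\frac{2s^2\Phim'(s^2)-\Phim(s^2)}{s^2}=\frac{\Thetam(s^2)}{s^2}.
\]
That is exactly what the paper uses, and it gives the constant $a/|p-1|$ in one clean step. For $p>1$ the $p$-condition yields $\frac{d}{ds}(g(s)/s)\ge a\,s^{p-2}-b\eta^\gamma s^{-2}$; integrating over $(\delta,r)$ gives
\[
\frac{g(r)}{r}\ \ge\ \frac{g(\delta)}{\delta}+\frac{a}{p-1}\big(r^{p-1}-\delta^{p-1}\big)+b\eta^\gamma\Big(\frac{1}{r}-\frac{1}{\delta}\Big),
\]
and now one lets $\eta\to 0$ first (killing the divergent $-b\eta^\gamma/\delta$ term), uses $H\ge 0$ to drop $H(\delta)/\delta$, and only then lets $\delta\to 0$. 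The case $p\in(0,1)$ is handled by integrating over $(r,A)$ and sending $A\to\infty$.

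Where your write-up goes wrong is the detour through $g(r)r^{1-p}$. The identity you assert,
\[
\frac{d}{dr}\big(g(r)r^{1-p}\big)=r^{-p}\Thetam(r^2),
\]
is false: since $\Thetam(r^2)=rg'(r)-g(r)$, one actually gets $r^{-p}\big[(1-p)g(r)+rg'(r)\big]=r^{-p}\big[\Thetam(r^2)+(2-p)g(r)\big]$, with an uncontrolled $(2-p)g$ term. This is why your constant came out as $a$ instead of $a/(p-1)$, and your proposed fix via ``another integrating factor'' is chasing the wrong error. Just stay with $g(s)/s$; no integrating-factor gymnastics are needed, and the singular integrand is $s^{-2}$ (paired with $b\eta^\gamma$), not $s^{-p}$, so the order of limits $\eta\to 0$ then $\delta\to 0$ is straightforward. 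Your argument for $\kappa_\infty\le p\le\kappa_0$ from comparing \eqref{Hass5} with \eqref{Hass4} as $r\to\infty$ and $r\to 0^+$ is correct and matches the paper.
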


\begin{proof} Assume first that $p>1$ in \eqref{Hass3}. Then, if $r>0$, $\delta\in(0,r)$, $s\in (\delta,r)$ and $\eta>0$, we infer from \eqref{Hass3} that 
$$
\frac{d}{ds} \left( \frac{\Phim(s^2)}{s} \right) = \frac{\Thetam(s^2)}{s^2} \ge a\ s^{p-2} - b \eta^\gamma\ s^{-2}.
$$
Integrating over $(\delta,r)$ with respect to $s$ gives
$$
\frac{\Phim(r^2)}{r} \ge \frac{\Phim(\delta^2)}{\delta} + \frac{a}{p-1}\ \left( r^{p-1} - \delta^{p-1} \right) + b \eta^\gamma\ \left( \frac{1}{r} - \frac{1}{\delta} \right) ,
$$
whence, after letting $\eta\to 0$ and using the nonnegativity of $H$, 
$$
\frac{H(r)}{r} \ge \frac{H(\delta)}{\delta} + \frac{a}{p-1}\ \left( r^{p-1} - \delta^{p-1} \right) \ge \frac{a}{p-1}\ \left( r^{p-1} - \delta^{p-1} \right) .
$$
Passing to the limit as $\delta\to 0$ gives \eqref{Hass5}. The proof is similar for $p\in (0,1)$ except than one integrates over $(r,A)$ for $A>r$ and then let $A\to\infty$.

Next, if $H$ also satisfies \eqref{Hass4}, the claimed constraints on $\kappa_0$ and $\kappa_\infty$ readily follow from \eqref{Hass5} by looking at the behavior for small $r$ and large $r$.
\end{proof}

We next recall  the basic existence and uniqueness theorem for regular initial data \cite{ABA98}. In fact, the result there refers to the special case \eqref{special}. However  the same method of proof can be used in order to obtain the following proposition \cite{adi}.

  \begin{prop}
  \label{basic}
  Let $0 \leq \varphi \in C^2_b(\mathbb{R}^N)$ and $H$ satisfy \eqref{Hass}. Then the Cauchy problem (\ref{HJV})-(\ref{init}) has a unique global solution  $\veps$ such that \newline

\begin{enumerate}
\item $\veps \in C^{2,1}(\mathbb{R}^N\times[0,\infty))$,\newline

\item $0 \leq \veps(x,t) \leq \|\varphi\|_{\infty}, \qquad (x,t) \in
\mathbb{R}^N\times(0,\infty)$, \newline

\item $\|\nabla_x \veps(\cdot,t)\|_{\infty} \leq \|\nabla_x \varphi\|_{\infty}\quad , \quad t>0.$
\end{enumerate}
\end{prop}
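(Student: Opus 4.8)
The plan is to reduce Proposition~\ref{basic} to a known well-posedness statement for the viscous Hamilton-Jacobi equation by first replacing the Hamiltonian $H$ — which is only locally Lipschitz on $(0,\infty)$ and whose growth at infinity is so far unconstrained — with a more tractable modification, and then recovering the original problem via a priori bounds and a comparison argument. Concretely, I would first record the elementary a priori estimates that must hold for \emph{any} reasonable solution: by the maximum principle applied to \eqref{HJV} (using $H\ge 0$, $H(0)=0$, and $0\le\varphi\le\|\varphi\|_\infty$) one gets $0\le\veps\le\|\varphi\|_\infty$, which is item~(2); and by differentiating \eqref{HJV} in $x$ and applying the maximum principle to $|\nabla_x\veps|^2$ (a Bernstein-type computation) — here the crucial structural point is that $H$ depends on $x$ only through $|\nabla_x\veps|$, so no ``bad'' gradient terms of the wrong sign appear — one gets $\|\nabla_x\veps(\cdot,t)\|_\infty\le\|\nabla_x\varphi\|_\infty$ for all $t>0$, which is item~(3). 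These bounds hold uniformly in time.

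Next I would exploit these a priori bounds to truncate $H$ outside the relevant range. Since any solution has $|\nabla_x\veps|\le\|\nabla_x\varphi\|_\infty=:M$, the behavior of $H$ on $(M,\infty)$ is irrelevant: I would pick a function $\widehat{H}\in C^1([0,\infty))$ (or at least globally Lipschitz, and smooth away from $0$) with $\widehat{H}=H$ on $[0,M]$, $\widehat{H}(0)=0$, $\widehat H\ge 0$, and $\widehat{H}$ bounded with bounded derivative — e.g. extend $H$ affinely (or by a constant) past $M$ after regularizing near the origin only if necessary. For this globally Lipschitz, subquadratic (indeed bounded-gradient) Hamiltonian, the quasilinear parabolic Cauchy problem \eqref{HJV}-\eqref{init} with $\varphi\in C^2_b(\mathbb{R}^N)$ has a unique global classical solution $\veps\in C^{2,1}(\mathbb{R}^N\times[0,\infty))$ by standard parabolic theory (this is exactly the framework of \cite{ABA98}, \cite{adi}, quoted in the excerpt, and I would invoke it). That solution satisfies the same a priori bounds (2)-(3) with $\widehat H$ in place of $H$, but since $\widehat H=H$ on $[0,M]$ and $|\nabla_x\veps|\le M$, it actually solves the \emph{original} equation \eqref{HJV} as well. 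Uniqueness for the original problem then follows because any classical solution of the original problem with the stated regularity inherits bound~(3), hence only sees $H$ on $[0,M]$, hence is also a solution of the truncated problem, which has a unique solution.

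The step I expect to be the main obstacle is making the Bernstein-type gradient estimate~(3) fully rigorous when $H$ is merely locally Lipschitz on $(0,\infty)$ and possibly non-differentiable (or with unbounded derivative) at $r=0$: differentiating $H(|\nabla_x\veps|)$ in $x$ requires care precisely where $\nabla_x\veps$ vanishes. The clean way around this is to first establish everything for a smooth, globally Lipschitz approximation $H_\delta$ of $H$ (smoothing near the origin), obtain the $\delta$-uniform bounds (2)-(3), pass to the limit $\delta\to 0$ using parabolic compactness (e.g. interior Schauder estimates plus the uniform $C^1$ bound), and check that the limit is the desired classical solution with the asserted bounds; this is the standard approximation scheme underlying \cite{ABA98,adi}. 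Since the proposition explicitly attributes the argument to those references, in the write-up I would keep this step brief: state that the same proof as in \cite{ABA98} applies, indicate the two maximum-principle estimates giving (2) and (3), and note that the only change needed for a general $H$ satisfying \eqref{Hass} — rather than the power case \eqref{special} — is the truncation argument above, which is licensed by (3).
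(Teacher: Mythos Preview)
The paper does not give a proof of Proposition~\ref{basic}: it simply \emph{recalls} the result from \cite{ABA98} (for the power case~\eqref{special}) and \cite{adi} (for the general $H$ satisfying~\eqref{Hass}), with the one-line remark that ``the same method of proof can be used.'' Your proposal is therefore not competing against any argument in the paper; rather, you have sketched --- quite accurately --- the standard scheme that those references carry out: a priori $L^\infty$ and gradient bounds via the maximum principle and a Bernstein computation, truncation of $H$ outside the range $[0,\|\nabla\varphi\|_\infty]$ to reduce to a globally Lipschitz nonlinearity, and a smoothing/approximation step to handle the lack of regularity of $H$ at $r=0$. Your identification of the last point as the only delicate issue is correct, and the fix you propose (approximate, get uniform bounds, pass to the limit) is exactly the mechanism used in \cite{ABA98,adi} and echoed later in the paper itself (cf.\ the use of the smooth approximants $\Phim$ in the proof of Proposition~\ref{bernstein}). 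Since you already plan to keep the write-up brief and defer to the cited references, your treatment matches the paper's.
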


Observe that the assumption $0 \leq \varphi$ entails no loss of generality as $\varphi$ can be replaced by $ \varphi + \|\varphi\|_{\infty}$ without changing the equation. Proposition~\ref{basic} is actually valid assuming only the first property in \eqref{Hass}.

A remarkable fact (which is crucial in our study) concerning the solution $\veps$ is that its gradient can be estimated
\textit{independently} of $\varepsilon$ while only a mild dependence on $\varepsilon$ shows up for its time derivative. Such estimates are obtained by the ``Bernstein method'' \cite{Be81}, namely, using the comparison principle for a certain function of $\nabla_x \veps$ or $\partial_t \veps$. The estimates needed in this paper are gathered in the following proposition.

\begin{prop}
\label{bernstein}
Let $p \neq 1$ and $0 \leq \varphi \in C^2_b(\mathbb{R}^N)$. Assume that $H$ satisfies the hypotheses \eqref{Hass} and \eqref{Hass3}. Then the solution $\veps$ to  (\ref{HJV})-(\ref{init}) satisfies, with $p$ as in \eqref{Hass3},
\begin{equation}
\label{gradx}
\|\nabla_x \veps(\cdot,t)\|_{\infty} \leq \lambda_p \|\varphi\|_{\infty}^{\frac{1}{p}} \, (at)^{-\frac{1}{p}} \quad , \qquad t>0,
\end{equation}
where $\lambda_p=1$ if $p>1$ and $\lambda_p=(2/p)^{\frac{1}{p}}$ if $p<1.$\newline

In addition, if $p>1$,
\begin{equation}
\label{gradxind}
\left\|\nabla_x \left(\veps ^{\frac{p-1}{p}}\right)(\cdot,t)\right\|_{\infty} \leq \mu_p t^{-\frac{1}{p}} \quad , \qquad t>0,
\end{equation}
where $\mu_p=(p-1)a^{-\frac{1}{p}}/p.$

For the time derivatives, the following estimates hold. First, for $0<\varepsilon<\varepsilon_0,$ where $\varepsilon_0$ depends only on $p$ and $N$,
\begin{equation}
\label{dudtpl}
\partial_t \veps(x,t) \leq 2^{\frac{p+1}{p}}\ N\ \lambda_p\ \|\varphi\|_{\infty}^{\frac{1}{p}}\ a^{-\frac{1}{p}}\ \varepsilon^{\frac{1}{2}}\ t^{-\frac{p+2}{2p}}
\end{equation}
for $(x,t) \in \QQ$.

Next, assume in addition that \eqref{Hass4} is satisfied. Then, for all $0<\varepsilon<\varepsilon_0,$
\begin{equation}
\label{dudtmn}
\partial_t \veps(x,t) \geq - L\ t^{-\mu} - 2^{\frac{p+1}{p}}\ N\ \lambda_p\ \|\varphi\|_{\infty}^{\frac{1}{p}}\ a^{-\frac{1}{p}}\ \varepsilon^{\frac{1}{2}}\ t^{-\frac{p+2}{2p}}
\end{equation}
for $(x,t)\in\QQ$, the constants $\mu$ and $L$ being defined in Theorem~\ref{bounded}. 
\end{prop}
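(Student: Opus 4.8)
The plan is to establish all four estimates \eqref{gradx}--\eqref{dudtmn} by the Bernstein method, working with the regularized Hamiltonians $\Phim$ and passing to the limit $\eta\to0$. Since $\varphi\in C^2_b(\mathbb{R}^N)$, Proposition~\ref{basic} gives a smooth solution $\veps$, and we may regularize $H$ by $\Phim(|\nabla\veps|^2)$ to obtain smooth approximate solutions $\veta$ whose gradients and time derivatives we can differentiate freely; the final estimates follow by letting $\eta\to0$ using \eqref{Hass} (ii) and continuous dependence.

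\textbf{Spatial gradient estimate \eqref{gradx}.} I would set $w=|\nabla\veta|^2$ and compute the parabolic equation it satisfies. Differentiating the regularized equation $\partial_t\veta-\varepsilon\Delta\veta+\Phim(w)=0$ in $x$, dotting with $\nabla\veta$, and using $\Delta w = 2\nabla\veta\cdot\nabla(\Delta\veta)+2|D^2\veta|^2$ leads to
$$
\partial_t w - \varepsilon\Delta w + 2\varepsilon|D^2\veta|^2 + 2\Phim'(w)\,\nabla\veta\cdot\nabla w = 0.
$$
The idea is then to seek a supersolution of the form $W(t)=(C t)^{-2/p}$ (or an $\eta$-corrected version) and apply the comparison principle on $\mathbb{R}^N\times(0,\infty)$. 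Plugging $W$ into the equation, the troublesome term is $2\Phim'(W)\nabla\veta\cdot\nabla W$, which vanishes since $W$ is constant in $x$; dropping the nonnegative term $2\varepsilon|D^2\veta|^2$, one needs $W'(t)\le 0$, which is automatic — but to get the sharp rate one must instead use that $w$ itself satisfies a gradient bound tied to $H$ via $\Thetam$. The correct route is to use the $p$-condition: the function $z=\veta^{(p-1)/p}$ (for $p>1$) or a similar power transforms the equation so that $\Thetam$ appears with a favorable sign, yielding $|\nabla z|\le\mu_p t^{-1/p}$ after comparison, which is exactly \eqref{gradxind}; combined with the bound $0\le\veta\le\|\varphi\|_\infty$ from Proposition~\ref{basic}, this gives \eqref{gradx}. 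For $p<1$ one argues analogously with the reversed inequality in the $p$-condition, picking up the factor $\lambda_p=(2/p)^{1/p}$.

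\textbf{Time derivative estimates \eqref{dudtpl}--\eqref{dudtmn}.} Here I would set $u=\partial_t\veta$, which satisfies the linearized equation
$$
\partial_t u - \varepsilon\Delta u + 2\Phim'(|\nabla\veta|^2)\,\nabla\veta\cdot\nabla u = 0.
$$
Since $u=\varepsilon\Delta\veta-\Phim(|\nabla\veta|^2)\le\varepsilon\Delta\veta$, an upper bound on $u$ reduces to bounding $\varepsilon\Delta\veta$ from above; using $|D^2\veta|\ge|\Delta\veta|/\sqrt N$ together with a local-in-time estimate on $\|D^2\veta\|_\infty$ coming from the $w$-equation (the $2\varepsilon|D^2\veta|^2$ term, integrated against a suitable kernel) produces the $\varepsilon^{1/2} t^{-(p+2)/(2p)}$ factor in \eqref{dudtpl} — the power of $t$ is dictated by scaling $|\nabla\veta|\sim t^{-1/p}$ and $D^2\veta\sim t^{-1/p}\cdot t^{-1/2}$, the extra $t^{-1/2}$ being the parabolic smoothing scale. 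For the lower bound \eqref{dudtmn}, one notes $u\ge-H(|\nabla\veta|^2{}^{1/2})-|\varepsilon\Delta\veta|$; the first term is controlled by \eqref{Hass4} and \eqref{gradx}, giving the $-L t^{-\mu}$ contribution with $L,\mu$ as in Theorem~\ref{bounded}, and the second term is the same $\varepsilon^{1/2}t^{-(p+2)/(2p)}$ quantity. Throughout one must check that the comparison principle applies on the unbounded domain $\mathbb{R}^N\times(0,\infty)$ — this is legitimate because all the barriers are functions of $t$ alone and the $\eta$-regularized equation is uniformly parabolic, so the classical maximum principle for $C^{2,1}$ functions suffices once one controls behavior at $t\to0^+$ (where the barriers blow up) and uses $\|\nabla\veta\|_\infty\le\|\nabla\varphi\|_\infty$ to prevent escape to spatial infinity.

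\textbf{Main obstacle.} The delicate point is extracting the sharp $\varepsilon^{1/2}$-dependence and the precise temporal power $t^{-(p+2)/(2p)}$ in \eqref{dudtpl}: this requires a careful second-order Bernstein estimate controlling $\varepsilon\|D^2\veta(\cdot,t)\|_\infty$ uniformly down to $t=0^+$, interpolating between the $\varepsilon$-independent first-order bound \eqref{gradx} and the parabolic regularization, and keeping track of the constant $\varepsilon_0(p,N)$ below which the relevant algebraic inequality (arising when one plugs the candidate barrier into the $w$-equation and uses the $p$-condition with $\eta$ small) actually closes. Managing the interplay between the $b\eta^\gamma$ error terms in the $p$-condition and the limit $\eta\to0$, so that they disappear in the final estimates without spoiling the constants, is the bookkeeping heart of the argument.
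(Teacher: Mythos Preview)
Your treatment of the gradient estimates \eqref{gradx}--\eqref{gradxind} is essentially the paper's approach: work with the regularized equation, set $u=f^{-1}(\veta)$ with $f(r)=r^{p/(p-1)}$ for $p>1$, derive a differential inequality for $w=|\nabla u|^2$ in which $\Thetam$ appears with the right sign, and compare with a barrier $K_\eta t^{-2/p}$. Your remark that the $p<1$ case is ``analogous'' glosses over a real difference: the paper uses a completely different transformation, $f(r)=2\eta^{\gamma/2}+\|\varphi\|_\infty-r^2/2$, and it is this specific choice (together with the two-sided control $\eta^{\gamma/2}\le u^2/2\le 2\eta^{\gamma/2}+\|\varphi\|_\infty$) that produces the constant $\lambda_p=(2/p)^{1/p}$. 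You should not expect the $p>1$ argument to carry over by symmetry.

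For the time-derivative estimates \eqref{dudtpl}--\eqref{dudtmn} there is a genuine gap. Your plan is to bound $\partial_t\veta=\varepsilon\Delta\veta-\Phim(|\nabla\veta|^2)$ by first controlling $\varepsilon\Delta\veta$ via a pointwise estimate on $\|D^2\veta\|_\infty$, obtained by ``integrating the $2\varepsilon|D^2\veta|^2$ term against a suitable kernel''. But that term appears with a sign in the $w$-equation that gives at best an integrated ($L^2$-type) bound, not a pointwise one, and no kernel argument converts this into the required $L^\infty$ control of $D^2\veta$ with the sharp $\varepsilon^{1/2}$ dependence. The scaling heuristic $D^2\veta\sim t^{-1/p}\cdot t^{-1/2}$ is dimensionally consistent but is not a proof.

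The paper's route (following Gilding) is different and rests on a specific trick you do not mention. One sets $\Gamma=M+N(\|\nabla\varphi\|_\infty^2-|\nabla V|^2)/(4M)$ and studies $w=(\delta\,\partial_tV-\vartheta)/\Gamma$. Because $\Gamma$ depends on $|\nabla V|^2$, the computation of $\partial_t w$ brings in the Bochner-type identity for $|\nabla V|^2$; the Cauchy--Schwarz inequality $N\sum(\partial_j\partial_kV)^2\ge|\Delta V|^2$ then combines with the PDE itself (which expresses $\varepsilon\Delta V$ in terms of $w\Gamma+\vartheta+\delta\Phim$) to produce a \emph{cubic} term $\Gamma w^3/(2M\varepsilon)$ in the equation $\mathcal{M}w=0$. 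It is this cubic nonlinearity that makes the simple barrier $W(t)=(\varepsilon/t)^{1/2}$ a supersolution, yielding $|\partial_tV|\lesssim\|\nabla\varphi\|_\infty(\varepsilon/t)^{1/2}$. The final step---also absent from your outline---exploits the autonomy of the equation: one restarts at time $t/2$, replaces $\|\nabla\varphi\|_\infty$ by $\|\nabla\veps(\cdot,t/2)\|_\infty$, and only then inserts \eqref{gradx} to obtain the exponent $t^{-(p+2)/(2p)}$ and the constant in \eqref{dudtpl}--\eqref{dudtmn}.
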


An estimate similar to \eqref{gradx} was obtained by Lions \cite[Section~I]{Li85} but with a dependence upon $\varepsilon$ which vanishes in the limit $\varepsilon\to 0$. The estimate \eqref{gradxind} (for the special case \eqref{special}) was first derived in  \cite{BL99}, and we follow it rather closely. Let us emphasize here that it is not only independent on $\varepsilon$ but also on the initial data, a property we shall use in the proof of Theorem~\ref{general}. The estimate \eqref{gradx} for the case $p<1$ (again for the special form \eqref{special}) was first derived in \cite{GGK03}.  However our proof   seems to be simpler. The estimates derived here for the time derivatives  generalize estimates obtained in \cite{Gi05} for the special case \eqref{special}. In this latter case we have $\kappa_0=\kappa_\infty=p,$ hence $\mu=1.$  

\begin{rem}\label{equit}
Remark that the estimates for the time derivative of $\veps$ are much
more complicated (and depend explicitly on the behavior of $H$ at
$r=0$ and $r=\infty$ as reflected in the additional assumption
\eqref{Hass4}). Such estimates are needed in order to ensure the
convergence of the solution $\veps$ to \eqref{HJV}-\eqref{init} (as
$\varepsilon\rightarrow 0$) in $W^{1,\infty}_{loc}(\QQ)$ to a
viscosity solution of ~\eqref{HJ}-\eqref{initial}. In fact, in
Appendix~\ref{apB} (see also Remark~\ref{simplet}) we show that the equicontinuity in $t$ of the family $\big\{\veps\big\}_{\varepsilon>0}$ can be obtained without the additional requirement \eqref{Hass4}. It follows, in view of the stability result for viscosity solutions (see, e.g.,
\cite[Th\'eor\`eme~2.3]{Ba94} or \cite[Theorem~1.4]{CEL84}) that the
limit function is a viscosity solution to \eqref{HJ}-\eqref{initial}. However, without the uniform boundedness in $W^{1,\infty}_{loc}(\QQ)$ it is not possible to show that the limiting solution  is differentiable a.e. and hence satisfies \eqref{HJ} a.e. (see \cite[Section~10.1]{Ev98}). 
\end{rem}

\begin{proof}[Proof of Proposition~\ref{bernstein}]
Observe that we can assume without loss of generality that $\veps>c$, where $c >0$ is arbitrary, by adding $c$ to $\varphi$ , provided the estimates do not depend on $c$.

We take the regularized function $\Phim$ as in  ~\eqref{Hass} and consider  the solution $\veta$ to the modified equation
\begin{eqnarray}
\partial_t \veta -\varepsilon \Delta \veta+\Phim(|\nabla \veta|^2) & = & 0, \qquad (x,t) \in \QQ, \label{HJV-mod}\\
\veta(x,0) & = & \varphi(x),\qquad  x\in \mathbb{R}^N
\label{initm}
\end{eqnarray}
for $\varepsilon>0$ and $\eta>0$. Inspecting the proof in \cite{ABA98} we see that the solution $\veta$ exists globally , belongs to $C^2_b(\mathbb{R}^N)$ for all $t \geq 0$ and satisfies $\veta>c$. In addition, it is smooth for $t>0$ and we can differentiate it as many times as needed. Also, as $\eta \rightarrow 0$,
$$
\left( \veta , \nabla\veta \right) \longrightarrow \left( \veps , \nabla\veps \right) \qquad  \text{uniformly in compact subsets of $\QQ$. }
$$

It therefore suffices to prove the estimates in Proposition~\ref{bernstein} for $\veta$, provided these estimates are independent of the positive constants $\eta$ and $c$. In what follows we simplify the notation by referring to $\veta$ as $V$.

We now consider a strictly monotone smooth function $f$ and define the functions $u$ and $w$ by
\begin{equation}
\label{w}
u=f^{-1}(V) \quad \mbox{ and } \quad w=|\nabla u|^2.
\end{equation}
Clearly $u$ and $w$ both belong to $C^{\infty}(\QQ)$ and their first derivatives in $x$ and $t$ are uniformly bounded and continuous in $\mathbb{R}^N \times [0,\infty)$. From (\ref{HJV-mod}) we obtain that $u$ solves
$$
f'(u) \left\{ \partial_t u- \varepsilon \Delta u -\varepsilon \frac{f''(u)}{f'(u)} |\nabla u|^2 +\frac{1}{f'(u)} \Phim(f'(u)^2 |\nabla u|^2 ) \right\} =0.
$$
so that
\begin{eqnarray}
\label{wt}
& & \partial_t w = 2 \nabla u \cdot \nabla \partial_t u \\ 
\nonumber
& = & 2 \nabla u \cdot \left\{ \varepsilon \left[ \Delta (\nabla u) +\nabla \left( \frac{f''(u)}{f'(u)}w \right) \right] -\nabla\left[ \frac{1}{f'(u)} \Phim(f'(u)^2 w) \right] \right\}\\ 
\nonumber & = & 2\varepsilon \nabla u\cdot\left[ \Delta(\nabla u) +\frac{f''(u)}{f'(u)}\nabla w \right]+2\varepsilon \left( \frac{f''}{f'}\right)'(u)w^2\\ 
\nonumber & + & 2\frac{f''(u)}{f'(u)^2}\Phim(f'(u)^2 w)w-2\Phim'(f'(u)^2 w)\ \left[ f'(u)\nabla u\cdot\nabla w +2 f''(u)w^2 \right].
\end{eqnarray}
Define the operator
$$
\mathcal{L}z=z_t-\varepsilon\Delta z+2\Phim'(f'(u)^2 |\nabla u|^2)f'(u)\nabla u\cdot\nabla z-2\varepsilon\frac{f''(u)}{f'(u)}\nabla u\cdot\nabla z.
$$
Noting that
\begin{equation}
\label{lapw}
\Delta w = 2\ \sum_{j=1}^N \sum_{k=1}^N  \left| \partial_{x_j}  \partial_{x_k} u\right|^2 + 2\nabla u\cdot \nabla(\Delta u) \ge 2\nabla u\cdot \nabla(\Delta u),
\end{equation}
we deduce from ~\eqref{wt} that
\begin{eqnarray}
\label{l2} 
\mathcal{L}w & \leq & 2\varepsilon\left( \frac{f''}{f'}\right)'(u)\ w^2 + 2 \frac{f''}{(f')^2}(u) \Phim(f'(u)^2 w)w\\ 
\nonumber
& &\ -4 \Phim'(f'(u)^2 w) f''(u) w^2\\ 
\nonumber& = & 2\varepsilon\left( \frac{f''}{f'}\right)'(u)\ w^2 - 2\frac{f''}{(f')^2}(u)\ \Thetam(f'(u)^2 w)w,
\end{eqnarray}
where $\Thetam$ is defined in Definition~\ref{pcondi}.

\medskip

We now specify the function $f$ and begin with the case $p>1$. We choose 
\begin{equation}
\label{f}
f(r)= r^{\frac{p}{p-1}},  \qquad r \geq 0,
\end{equation}
so that 
$$
-2\left( \frac{f''}{f'}\right)'(r)=\frac{2}{p-1}r^{-2}\geq 0 \quad \mbox{ and } \frac{f''}{(f')^2}(r)=\frac{1}{p}r^{-\frac{p}{p-1}}.
$$
Inserting these estimates in ~\eqref{l2} we get
$$
\mathcal{L}w + \frac{2}{p} u^{-\frac{p}{p-1}}\Thetam(f'(u)^2 w)w\leq 0.
$$
Owing to Definition~\ref{pcondi}~(i) we further obtain
$$\mathcal{L}w+\frac{2}{p}u^{-\frac{p}{p-1}} \left[ af'(u)^p w^{\frac{p}{2}}-b\eta^\gamma \right]w\leq 0,$$
and finally, inserting $f'(r)=p r^{\frac{1}{p-1}}/(p-1)$, we obtain
\begin{equation}
\label{lww}
\mathcal{L}w+\frac{2}{p-1}\left( \frac{p}{p-1} \right)^{p-1}aw^{1+\frac{p}{2}}\leq \frac{2b}{p}\eta^\gamma u^{-\frac{p}{p-1}}w.
\end{equation}
Note that $u^{-\frac{p}{p-1}}=V^{-1}\leq c^{-1}$, so that ~\eqref{lww}
yields, if we take $2b\eta^{\frac{\gamma}{2}}<pc,$
\begin{equation}
\label{lwww}
\mathcal{L}w+\frac{2a}{p-1}\left( \frac{p} {p-1} \right)^{p-1}w^{1+\frac{p}{2}}\leq \eta^{\frac{\gamma}{2}}w.
\end{equation}
Now consider the function $h_\eta(t)=K_\eta t^{-\frac{2}{p}}$, where
$K_\eta>0$ is a constant to be determined. We require $h_\eta$ to be a supersolution (in some time interval) to ~\eqref{lwww}, namely,
$$
\mathcal{L}h_\eta+\frac{2a}{p-1}\left( \frac{p} {p-1} \right)^{p-1} h_\eta^{1+\frac{p}{2}}\geq\eta^{\frac{\gamma}{2}}h_\eta.
$$
This condition is satisfied in $0<t<\eta^{-\frac{\gamma}{4}},$ if
$$-\frac{2}{p}K_\eta+\frac{2a}{p-1} \left( \frac{p}{p-1} \right)^{p-1} K_\eta^{1+\frac{p}{2}}=\eta^{\frac{\gamma}{4}}K_\eta,$$
or
\begin{equation}
K_\eta=\left( \frac{1}{a}+\frac{p}{2a}\eta^{\frac{\gamma}{4}}\right)^{\frac{2}{p}}\ \left( \frac{p-1}{p} \right)^2.
\end{equation}
The comparison principle now implies that $w(x,t)\leq h_\eta(t)$ for $x\in\mathbb{R}^N$ and $0<t<\eta^{-\frac{\gamma}{4}}$. Consequently, 
\begin{equation}
\label{lwwww}
\left\| \nabla \left( (\veta)^{\frac{p-1}{p}} \right)(.,t) \right\|_\infty\leq K_\eta^{\frac{1}{2}}  t^{-\frac{1}{p}}, \qquad 0<t<\eta^{-\frac{\gamma}{4}}, \quad 0<\frac{2b}{p}\eta^{\frac{\gamma}{2}}<c.
\end{equation}
In addition, combining \eqref{lwwww} with the bound $\veta\le\|\varphi\|_\infty + c$ gives
\begin{equation}
\label{volvic}
\|\nabla\veta(.,t)\|_\infty\leq \frac{p K_\eta^{\frac{1}{2}}}{p-1}\ \left( \|\varphi\|_\infty + c \right)^{\frac{1}{p}}  t^{-\frac{1}{p}}, \qquad 0<t<\eta^{-\frac{\gamma}{4}}, \quad 0<\frac{2b}{p}\eta^{\frac{\gamma}{2}}<c.
\end{equation}

These estimates are independent of $\varepsilon>0$, and by letting $\eta \rightarrow 0$, and then $c\rightarrow 0$, we obtain \eqref{gradxind} and \eqref{gradx} in the case $p>1$.

\medskip

We now turn to the case $0<p<1.$ Our starting point is again the inequality ~\eqref{l2} with the same function $\Phim$ but with a different choice of the function $f$. More precisely, instead of \eqref{f}, we take
\begin{equation}
\label{f1}
f(r)=2\eta^{\frac{\gamma}{2}}+\|\varphi\|_{\infty}-\frac{1}{2}r^2.
\end{equation}
Then
$$
-2\left( \frac{f''}{f'}\right)'(r)=\frac{2}{r^2}\geq 0 \quad \mbox{ and } \frac{f''}{(f')^2}(r)=-\frac{1}{r^2}.
$$
Inserting these estimates in \eqref{l2} we get
$$
\mathcal{L}w - \frac{2}{u^2}\ \Thetam(f'(u)^2 w)w\leq 0,
$$
so that in conjunction with Definition~\ref{pcondi}~(ii) we obtain
$$
\mathcal{L}w+2au^{p-2}w^{1+\frac{p}{2}}\leq\frac{2b \eta^\gamma}{u^2}\ w.
$$
Taking $0<c<\eta^{\frac{\gamma}{2}}$ the maximum principle (for $V=\veta$) implies that
\begin{equation}
\label{uestim}
\eta^{\frac{\gamma}{2}}\leq \frac{1}{2}u^2\leq 2\eta^{\frac{\gamma}{2}}+\|\varphi\|_{\infty}. 
\end{equation}
This estimate provides an upper bound for the right-hand side of the above inequality and leads us to
\begin{equation}
\label{Lw}
\mathcal{L}w+2au^{p-2}w^{1+\frac{p}{2}}\leq b \eta^{\frac{\gamma}{2}} w.
\end{equation}
Also by \eqref{uestim}, since $p-2<0,$
$$
u^{p-2}\geq \left[ 2(2\eta^{\frac{\gamma}{2}}+\|\varphi\|_{\infty})\right]^{\frac{p-2}{2}},
$$
so that ~\eqref{Lw} yields
\begin{equation}
\label{Lww}
\mathcal{L}w+2^{\frac{p}{2}}a (2\eta^{\frac{\gamma}{2}}+\|\varphi\|_{\infty})^{\frac{p-2}{2}}w^{1+\frac{p}{2}} \leq b \eta^{\frac{\gamma}{2}} w.
\end{equation}
As above, we now try a supersolution to \eqref{Lww} of the form $h_\eta(t)=K_\eta t^{-\frac{2}{p}}$ (in a certain time interval). We therefore need
$$
-\frac{2}{p}+2^{\frac{p}{2}}a (2\eta^{\frac{\gamma}{2}}+\|\varphi\|_{\infty})^{\frac{p-2}{2}}K_\eta^{\frac{p}{2}} \geq b \eta^{\frac{\gamma}{2}} t,
$$
hence for $0<t<\eta^{-\frac{\gamma}{4}}$ we can take
$$
K_\eta=\left( \frac{2+bp\eta^{\frac{\gamma}{4}}}{2^{\frac{p}{2}}ap}\right)^{\frac{2}{p}}\ \left( 2\eta^{\frac{\gamma}{2}}+\|\varphi\|_{\infty} \right)^{\frac{2-p}{p}}.
$$
The comparison principle then entails that $w(x,t)\le K_\eta t^{-\frac{2}{p}}$ for $x\in\mathbb{R}^N$ and $0<t<\eta^{-\frac{\gamma}{4}}$. 
Using ~\eqref{f1} and ~\eqref{uestim} we conclude that
$$
\|\nabla\veta(\cdot,t)\|_{\infty} \leq \| u(\cdot,t)\|_{\infty} \|\nabla u(\cdot,t)\|_{\infty} \leq \left( \frac{2+bp\eta^{\frac{\gamma}{4}}}{pa} \right)^{\frac{1}{p}}\ (2\eta^{\frac{p\gamma}{2}}+\|\varphi\|_{\infty})^{\frac{1}{p}} t^{-\frac{1}{p}}
$$
for $0<t<\eta^{-\frac{\gamma}{4}}$ and $0<c<\eta^{\frac{\gamma}{2}}$.
This estimate is independent of $\varepsilon>0.$ Letting $c\rightarrow 0$ and then $\eta\rightarrow 0$ we obtain ~\eqref{gradx} for $0<p<1$
with $\lambda_p= \big( 2/p\big)^{\frac{1}{p}}.$

\bigskip

We next turn to the proof of \eqref{dudtpl} and~\eqref{dudtmn}. We still work with the modified equation \eqref{HJV-mod} and simplify as before the notation by setting $\veta=V.$ We follow the idea of proof in \cite[Lemma 10]{Gi05}.

Let $M>0$ and $\vartheta>0$ be positive constants (to be specified later) and define $\Gamma=M + N\ \left( \|\nabla\varphi\|_\infty^2-|\nabla V|^2 \right)/(4M)$ and $w=\left( \delta\ \partial_t V - \vartheta \right)/\Gamma$ for $\delta\in\{-1,1\}$. From \eqref{HJV-mod} we get readily
$$
\partial_t w = -\frac{w}{\Gamma} \partial_t \Gamma+\frac{\varepsilon\Delta(\Gamma w)}{\Gamma}- \frac{2}{\Gamma}\ \Phim'(|\nabla V|^2)\nabla V\cdot\nabla(\Gamma w)
$$
which we can rewrite as
\begin{equation}
\label{lin-w}
\partial_t w = \frac{N}{4M}\ \frac{w}{\Gamma}\ A+B\cdot\nabla w+\varepsilon \Delta w,
\end{equation}
where 
$$
B = 2 \varepsilon \frac{\nabla\Gamma}{\Gamma} - 2 \Phim'\left( |\nabla V|^2 \right) \nabla V
$$ 
is a bounded continuous function and
$$
A= \partial_t\left( |\nabla V|^2 \right) - \varepsilon\Delta\left( |\nabla V|^2 \right) + 2 \Phim'\left( |\nabla V|^2 \right) \nabla V\cdot \nabla\left( |\nabla V|^2 \right) .
$$
Recalling that (cf. \eqref{lapw})
$$
\Delta\left( |\nabla V|^2 \right) =  2\nabla V\cdot \nabla(\Delta V)
+ 2 \sum\limits_{j=1}^N\sum\limits_{k=1}^N(\partial_{x_j}\partial_{x_k}V)^2
$$
and
$$
\partial_t \left( |\nabla V|^2 \right) - 2\varepsilon\nabla V\cdot \nabla(\Delta V) + 2 \Phim'\left( |\nabla V|^2 \right) \nabla V\cdot \nabla\left( |\nabla V|^2 \right)=0
$$
by~\eqref{wt} (with $f(r)=r$ so that $u=V$), we obtain
$$
A = 2\varepsilon \nabla V\cdot \nabla(\Delta V)- \varepsilon \Delta\left( |\nabla V|^2 \right) = -2\ \varepsilon \sum\limits_{j=1}^N \sum\limits_{k=1}^N (\partial_{x_j}\partial_{x_k}V)^2.
$$
Then \eqref{lin-w} reads
$$
\partial_t w - \varepsilon \Delta w - B\cdot\nabla w + \frac{N \varepsilon}{2M}\ \frac{w}{\Gamma}\ \left( \sum\limits_{j=1}^N \sum\limits_{k=1}^N (\partial_{x_j}\partial_{x_k}V)^2 \right)=0
$$
or
$$
\partial_t w - \varepsilon \Delta w - B\cdot\nabla w + \frac{\varepsilon}{2M}\ A_1\  \frac{w}{\Gamma} + \frac{\varepsilon}{2M}\ \frac{w}{\Gamma}\  |\Delta V|^2  = 0
$$
with
$$
A_1 = N\ \sum\limits_{j=1}^N \sum\limits_{k=1}^N (\partial_{x_j}\partial_{x_k}V)^2 - |\Delta V|^2 \ge 0
$$
by the Cauchy-Schwarz inequality. Noting that 
$$
 |\Delta V|^2 = \frac{1}{\varepsilon^2}\ \left( w \Gamma + \vartheta + \delta\ \Phim\left( |\nabla V|^2 \right) \right)^2
$$
by \eqref{HJV-mod} and introducing the differential operator
\begin{eqnarray*}
\mathcal{M} z & = & \partial_t z - \varepsilon \Delta z - B\cdot\nabla
z + \frac{\Gamma}{2M\varepsilon}\ z^3 + \frac{1}{M\varepsilon}\ \left(
\vartheta + \delta\ \Phim\left( |\nabla V|^2 \right) \right)\ z^2 \\ 
& & +\ \frac{\varepsilon}{2M\Gamma}\ \left\{ A_1 + \frac{\left(
\vartheta + \delta\ \Phim\left( |\nabla V|^2 \right)
\right)^2}{\varepsilon^2} \right\}\ z , 
\end{eqnarray*}
we realize that $w$ solves
\begin{equation}
\label{alet}
\mathcal{M} w = 0 \quad \mbox{ in } \quad \mathbb{R}^N\times (0,\infty).
\end{equation}

\medskip

We first take $\delta=-1$ and 
$$
\vartheta = \sup_{r\in [0,\|\nabla\varphi\|_\infty]}\{\Phim(r^2)\}
$$
in the definition of $w$. As $\Gamma\ge M$, we infer from the
nonnegativity of $A_1$, $\vartheta$, $\Phim$ and
Proposition~\ref{basic}~(3) that $W(t) =
(\varepsilon/t)^{\frac{1}{2}}$ satisfies 
$$
\mathcal{M} W \ge - \frac{\sqrt{\varepsilon}}{2}\ t^{-\frac{3}{2}} +
\frac{\Gamma}{2M\varepsilon}\ W^3(t) \ge  0 . 
$$
Therefore $W$ is a supersolution to (\ref{alet}) and the comparison
principle entails that $w\le W$ in $\mathbb{R}^N\times
(0,\infty)$. Consequently, 
$$
- \partial_t V(x,t) - \sup_{r\in
[0,\|\nabla\varphi\|_\infty]}\{\Phim(r^2)\}\le \left(
\frac{\varepsilon}{t} \right)^{\frac{1}{2}}\ \Gamma(x,t) \le \left( M
+ \frac{N}{4M}\ \|\nabla\varphi\|_\infty^2 \right)\ \left(
\frac{\varepsilon}{t} \right)^{\frac{1}{2}} . 
$$
Choosing $M=\|\nabla\varphi\|_\infty$, we end up with
\begin{equation}
\label{baffe}
\partial_t V(x,t) \ge - \sup_{r\in
[0,\|\nabla\varphi\|_\infty]}\{\Phim(r^2)\} - \frac{N+4}{4}\
\|\nabla\varphi\|_\infty\ \left( \frac{\varepsilon}{t}
\right)^{\frac{1}{2}} . 
\end{equation}

\medskip

We next take $\delta=1$ and $\vartheta=0$ in the definition of $w$. As
above, it follows from the nonnegativity of $A_1$ and $\Phim$ and the
bound $\Gamma\ge M$ that the function $W$ satisfies $\mathcal{M} W\ge
0$ in $\mathbb{R}^N\times (0,\infty)$, whence $w\le W$ by the
comparison principle. Therefore,  
$$
\partial_t V(x,t) \le \left( M + \frac{N}{4M}\
\|\nabla\varphi\|_\infty^2 \right)\ \left( \frac{\varepsilon}{t}
\right)^{\frac{1}{2}} , 
$$
and the choice $M=\|\nabla\varphi\|_\infty$ gives
\begin{equation}
\label{torgnole}
\partial_t V(x,t) \le \frac{N+4}{4}\ \|\nabla\varphi\|_\infty\ \left(
\frac{\varepsilon}{t} \right)^{\frac{1}{2}} . 
\end{equation}

We then pass to the limit as $\eta\to 0$ and infer from (\ref{Hass3})
and the convergence of $\left( v_\varepsilon^\eta \right)_{\eta}$
towards $v_\varepsilon$ that 
$$
-\sup_{r\in [0,\|\nabla\varphi\|_\infty]}\{H(r)\} -\frac{N+4}{4}\
\|\nabla\varphi\|_\infty\ \left( \frac{\varepsilon}{t}
\right)^{\frac{1}{2}} \le \partial_t v_\varepsilon(x,t) \le
\frac{N+4}{4}\ \|\nabla\varphi\|_\infty\ \left( \frac{\varepsilon}{t}
\right)^{\frac{1}{2}} 
$$
for $(x,t)\in \mathbb{R}^N\times (0,\infty)$. We finally use
(\ref{Hass4}) to conclude that 
\begin{eqnarray*}
\partial_t v_\varepsilon(x,t) & \ge & -g_H\ \left(
\|\nabla\varphi\|_\infty^{\kappa_\infty} +
\|\nabla\varphi\|_\infty^{\kappa_0} \right)-\frac{N+4}{4}\
\|\nabla\varphi\|_\infty\ \left( \frac{\varepsilon}{t}
\right)^{\frac{1}{2}} \\ 
\partial_t v_\varepsilon(x,t) & \le & \frac{N+4}{4}\
\|\nabla\varphi\|_\infty\ \left( \frac{\varepsilon}{t}
\right)^{\frac{1}{2}} 
\end{eqnarray*}
for $(x,t)\in \mathbb{R}^N\times (0,\infty)$. But, since
\eqref{HJV-mod} is an autonomous equation, we also have 
\begin{eqnarray*}
\partial_t v_\varepsilon(x,t) & \ge & -g_H\ \left( \left\|\nabla
v_\varepsilon\left( \frac{t}{2} \right)\right\|_\infty^{\kappa_\infty}
+ \left\|\nabla v_\varepsilon\left( \frac{t}{2}
\right)\right\|_\infty^{\kappa_0} \right) \\ 
& & -\ \frac{N+4}{4}\ \left\|\nabla v_\varepsilon\left( \frac{t}{2}
\right)\right\|_\infty\ \left( \frac{2\varepsilon}{t}
\right)^{\frac{1}{2}} , \\ 
\partial_t v_\varepsilon(x,t) & \le & \frac{N+4}{4}\ \left\|\nabla
v_\varepsilon\left( \frac{t}{2} \right)\right\|_\infty\ \left(
\frac{2\varepsilon}{t} \right)^{\frac{1}{2}} 
\end{eqnarray*}
for $(x,t)\in \mathbb{R}^N\times (0,\infty)$. Inserting \eqref{gradx}
in the above estimates and using that $N+4\le 4\sqrt{2} N$ complete
the proof of \eqref{dudtpl} and \eqref{dudtmn}. 
\end{proof}

\medskip

As already mentioned, in the particular case where $H$ is given by \eqref{special}, we have $\kappa_0=\kappa_\infty=p,$ and thus $\mu=1.$ We can then derive a better estimate for the time derivative, using a scaling argument as follows.

\begin{cor}
\label{homog}
Let $H$ be of the special form \eqref{special}. Then for every $\rho>0$ there exists a constant $C>0,$ depending only on $p,N,\|\varphi\|_{\infty},\rho$ such that, for all $0<\varepsilon<\varepsilon_0$, 
\begin{equation}
\label{t-1}
|\partial_t \veps(x,t)|\leq C t^{-1},\qquad \begin{cases}
(x,t) \in \mathbb{R}^N\times(\rho,\infty),\quad p\in (0,1)\cup (1,2],\\
(x,t) \in \mathbb{R}^N\times(0,\rho),\quad p\in [2,\infty).
\end{cases}
\end{equation}
\end{cor}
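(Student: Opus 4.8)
The plan is to deduce the corollary directly from the two one-sided time-derivative bounds \eqref{dudtpl}--\eqref{dudtmn} of Proposition~\ref{bernstein}, specialized to the homogeneous case \eqref{special}. For $H(r)=r^p$ one has $\kappa_0=\kappa_\infty=p$, hence $\mu=1$ in Theorem~\ref{bounded} and the constant $L$ there depends only on $p$ and $\|\varphi\|_\infty$. First I would combine \eqref{dudtpl} with \eqref{dudtmn} to get, for $0<\varepsilon<\varepsilon_0$ and every $(x,t)\in\QQ$,
$$
|\partial_t \veps(x,t)| \le L\,t^{-1} + C_0\,\varepsilon^{1/2}\,t^{-\frac{p+2}{2p}}, \qquad C_0 = 2^{\frac{p+1}{p}}\,N\,\lambda_p\,\|\varphi\|_\infty^{1/p}\,a^{-1/p}.
$$
Everything then reduces to absorbing the second term into a multiple of $t^{-1}$ on the indicated ranges of $t$.

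For that I would write $t^{-\frac{p+2}{2p}} = t^{-1}\,t^{\frac{p-2}{2p}}$ and use the sign of $\frac{p-2}{2p}$. If $0<p\le 2$ and $t\ge\rho$, then $t^{\frac{p-2}{2p}} = t^{-\frac{2-p}{2p}} \le \rho^{-\frac{2-p}{2p}}$; if $p\ge 2$ and $0<t\le\rho$, then $t^{\frac{p-2}{2p}} \le \rho^{\frac{p-2}{2p}}$. In both regimes, together with $\varepsilon\le\varepsilon_0$, this yields $C_0\,\varepsilon^{1/2}\,t^{-\frac{p+2}{2p}} \le C_1\,t^{-1}$ with $C_1 = C_0\,\varepsilon_0^{1/2}\,\rho^{-(2-p)/(2p)}$ in the first case and $C_1 = C_0\,\varepsilon_0^{1/2}\,\rho^{(p-2)/(2p)}$ in the second, hence $|\partial_t\veps(x,t)| \le C\,t^{-1}$ with $C = L+C_1$, which depends only on $p$, $N$, $\|\varphi\|_\infty$ and $\rho$, as claimed. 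Note that for $p=2$ the exponent $\frac{p-2}{2p}$ vanishes and the argument is valid for every $t>0$; this is precisely why $p=2$ appears in both alternatives of \eqref{t-1}. If one wants to allow $\varphi\in C_b(\mathbb{R}^N)$ rather than $C_b^2(\mathbb{R}^N)$, I would run the above for smoothed data $\varphi_n\in C_b^2(\mathbb{R}^N)$ with $\sup_n\|\varphi_n\|_\infty<\infty$ and $\varphi_n\to\varphi$ locally uniformly, and let $n\to\infty$, the bound on $\partial_t\vepsn$ being uniform in $n$.

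The same computation can be packaged as the announced scaling argument: since \eqref{HJV} with $H(r)=r^p$ is autonomous and homogeneous, $V(x,t)=\veps(\sigma x,\sigma^p t)$ solves the viscous equation with viscosity $\varepsilon\sigma^{p-2}$ and initial datum $\varphi(\sigma\,\cdot)$ of unchanged sup-norm, so applying \eqref{dudtpl}--\eqref{dudtmn} to $V$ at a fixed time and then taking $\sigma^p=t$ reproduces the displayed inequality. I do not anticipate a real obstacle here: the only thing to be careful about is the elementary comparison of the exponents $\frac{p+2}{2p}$ and $1$ --- which is what separates the two cases and forces the time restrictions --- and, when starting from merely continuous bounded data, the routine stability passage $\vepsn\to\veps$.
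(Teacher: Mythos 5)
Your proof is correct, but it takes a more elementary route than the paper. You combine \eqref{dudtpl} and \eqref{dudtmn} (with $\kappa_0=\kappa_\infty=p$, hence $\mu=1$) into $|\partial_t \veps|\le L\,t^{-1}+C_0\,\varepsilon^{1/2}\,t^{-(p+2)/(2p)}$ and then absorb the second term into $C\,t^{-1}$ using the boundedness of $t^{(p-2)/(2p)}$ on $t\ge\rho$ when $p\le 2$ and on $t\le\rho$ when $p\ge 2$, together with $\varepsilon<\varepsilon_0$; since $a$, $\lambda_p$, $g_H$ and $\varepsilon_0$ depend only on $p$ and $N$ for $H(r)=r^p$, the resulting constant has the claimed dependence, so the corollary follows. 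The paper argues differently: it only uses \eqref{dudtpl}--\eqref{dudtmn} to get a uniform-in-$\varepsilon$ bound $\|\partial_t\veps(\cdot,\rho)\|_\infty\le c_0$ at the single time $t=\rho$, and then propagates it by the rescaling $V(y,\tau)=\veps(r^{1/p}y,r\tau)$, which solves \eqref{HJV} with viscosity $\varepsilon r^{(p-2)/p}$; this stays below $\varepsilon_0$ exactly when $r>1$, $p\le 2$ or $r<1$, $p\ge 2$, which is the same sign condition on $p-2$ that governs your absorption. Your version is shorter and stays entirely within the already-proved pointwise estimates, but it leans on the precise form of the $\varepsilon^{1/2}t^{-(p+2)/(2p)}$ error term; the paper's scaling argument makes transparent where the homogeneity of $H$ (the special form \eqref{special}) is used and would transfer any uniform bound at one time to the whole time range, regardless of how that bound was obtained. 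Your closing remark essentially reproduces the paper's proof, up to the small slip that one should take $\sigma^p=t/\rho$ (so that the rescaled solution is evaluated at time $\rho$), not $\sigma^p=t$; the approximation step for merely continuous bounded data is harmless but not needed, since the estimates of Proposition~\ref{bernstein} extend to $\varphi\in C_b(\mathbb{R}^N)$ as recorded in Corollary~\ref{general-cor}.
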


\begin{proof}
Note that $\veps$ satisfies, in view of \eqref{dudtpl}-\eqref{dudtmn}, the estimate
$$
\|\partial_t \veps(\cdot,\rho)\|_\infty\leq c_0, \qquad
0<\varepsilon<\varepsilon_0,
$$
where $c_0$ is independent of $\varepsilon .$ Define the function
$$V(y,\tau)=\veps(r^\beta y,r^\alpha\tau),\qquad (y,\tau)\in\QQ, \quad r>0.$$
It satisfies the equation (using the special form of $H$)
$$
\partial_\tau V(y,\tau)+r^{\alpha-p\beta}|\nabla V(y,\tau)|^p=\varepsilon r^{\alpha-2\beta}\Delta V(y,\tau).
$$
Assume first that $0<p\leq 2$ and take $\alpha=1$, $\beta=p^{-1}$ and $r>1$. Then $\varepsilon r^{\alpha-2\beta}<\varepsilon<\varepsilon_0$ hence
$$\|\partial_\tau V(\cdot,\rho)\|_\infty\leq c_0, \qquad$$
and turning back to $\veps$ with $t=r\rho$ we obtain \eqref{t-1} for $0<p\leq 2$. In the case $p>2$ we repeat the same argument, but with $r<1$.
\end{proof}

In view of the fact that only $\|\varphi\|_{\infty}$ appears in the  estimates , we can follow the methodology of \cite{GGK03} and extend the result of Proposition~\ref{basic} as follows.

  \begin{cor}
  \label{general-cor}
 Let $0 \leq \varphi \in C_b(\mathbb{R}^N)$, and let $H$ satisfy the
 hypotheses ~\eqref{Hass} and ~\eqref{Hass3}. Then
 (\ref{HJV})-(\ref{init}) has a unique global solution
 $\veps$ such that 
\begin{itemize}
\item[(i)] $\veps \in C^{2,1}(\mathbb{R}^N\times(0,\infty))\cap
C(\mathbb{R}^N\times[0,\infty)),$  
\item[(ii)] $0 \leq \veps(x,t) \leq \|\varphi\|_{\infty}, \qquad (x,t) \in
 \mathbb{R}^N\times(0,\infty)$,
\item[(iii)] $\veps$ satisfies in $\mathbb{R}^N\times(0,\infty)$ all the
estimates of  Proposition~\ref{bernstein}, the estimate \eqref{dudtmn}
being only true if $H$ fulfills the additional assumption \eqref{Hass4}.
\end{itemize}

 In addition, if $\varphi \in W^{1,\infty}(\mathbb{R}^N)$, then
$$
  \|\nabla_x \veps(\cdot,t)\|_{\infty} \leq \|\nabla_x
\varphi\|_{\infty}\quad , \quad t>0. 
$$
 \end{cor}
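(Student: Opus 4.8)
The plan is to obtain Corollary~\ref{general-cor} by approximating the merely bounded continuous datum $\varphi$ by smooth data and passing to the limit, using the crucial observation that the estimates of Proposition~\ref{bernstein} depend on $\varphi$ only through $\|\varphi\|_\infty$. First I would pick a sequence $(\varphi_n)$ in $C^2_b(\mathbb{R}^N)$ with $0\le \varphi_n\le \|\varphi\|_\infty$ for all $n$ and $\varphi_n\to\varphi$ locally uniformly in $\mathbb{R}^N$ (for instance, by mollification, after first checking that a slight truncation keeps things in $[0,\|\varphi\|_\infty]$; since $\varphi$ is bounded one may also arrange $\varphi_n\to\varphi$ uniformly, which simplifies the initial-layer argument below). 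By Proposition~\ref{basic}, for each $n$ and each $\varepsilon>0$ there is a unique solution $\vepsn\in C^{2,1}(\mathbb{R}^N\times[0,\infty))$ to \eqref{HJV}-\eqref{init} with datum $\varphi_n$, satisfying $0\le \vepsn\le \|\varphi_n\|_\infty\le\|\varphi\|_\infty$. Moreover, by Proposition~\ref{bernstein}, for $t>0$ one has $\|\nabla_x \vepsn(\cdot,t)\|_\infty\le \lambda_p\|\varphi\|_\infty^{1/p}(at)^{-1/p}$ and the corresponding bounds \eqref{dudtpl} (and \eqref{dudtmn} when \eqref{Hass4} holds), all with constants depending on $n$ only through $\|\varphi\|_\infty$, hence uniform in $n$.

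Next I would extract a limit. Fix a compact set $Q=\overline{B_R}\times[\tau_1,\tau_2]\subset\QQ$. On $Q$ the family $(\vepsn)_n$ is uniformly bounded (by $\|\varphi\|_\infty$) and, by the uniform gradient and time-derivative bounds just quoted, uniformly Lipschitz in $(x,t)$; hence by Arzel\`a--Ascoli a subsequence converges in $C(Q)$. A diagonal argument over an exhaustion of $\QQ$ by such compacta yields a subsequence, still denoted $(\vepsn)$, converging locally uniformly in $\QQ$ to some $\veps$. Interior parabolic regularity applied to \eqref{HJV} then upgrades this: since the coefficients $H(|\nabla\vepsn|)$ stay bounded on compact subsets of $\QQ$ (using the $\nabla_x$ bound and the continuity of $H$), standard $L^q$ and Schauder estimates give local bounds on $\vepsn$ in $C^{2,1}$ on compact subsets of $\QQ$ that are again uniform in $n$; thus the convergence $\vepsn\to\veps$ holds in $C^{2,1}_{loc}(\QQ)$, so $\veps\in C^{2,1}(\mathbb{R}^N\times(0,\infty))$ and $\veps$ solves \eqref{HJV} classically in $\QQ$. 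Passing to the limit in the pointwise estimates of Proposition~\ref{bernstein} gives (ii) and (iii) for $\veps$. The additional assertion when $\varphi\in W^{1,\infty}$ is handled the same way: one may then choose the $\varphi_n$ so that in addition $\|\nabla\varphi_n\|_\infty\le\|\nabla\varphi\|_\infty$ (mollification does this), apply Proposition~\ref{basic}~(3) to each $\vepsn$, and let $n\to\infty$.

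The main obstacle is the behavior up to $t=0$: one must show $\veps$ extends continuously to $\mathbb{R}^N\times[0,\infty)$ with $\veps(\cdot,0)=\varphi$, and that the limit does not depend on the approximating sequence (giving uniqueness of the solution in the class~(i)--(ii)). For the continuity at $t=0$ I would construct, for each fixed point $x_0$ and each $\sigma>0$, explicit sub- and supersolution barriers of the form $(\varphi(x_0)\pm\sigma) \pm C_\sigma\,|x-x_0|^2 \pm C_\sigma' t$ with $C_\sigma$ large (depending on the modulus of continuity of $\varphi$ near $x_0$ and on the uniform $\nabla_x$ bound controlling $H(|\nabla\vepsn|)$, which on such a bounded test function is itself bounded): these dominate/are dominated by $\vepsn$ at $t=0$ by choosing $C_\sigma$ in terms of the (uniform) modulus of continuity of the $\varphi_n$'s, and remain super/subsolutions of \eqref{HJV} for $n$ large and $\varepsilon$ fixed by a direct computation using that $\Delta$ of the barrier is bounded and $H\ge0$. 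The comparison principle for the classical viscous problem (Proposition~\ref{basic}, whose uniqueness part supplies it) then sandwiches $\vepsn$, and letting $n\to\infty$ and then $\sigma\to0$ gives $\veps(x,t)\to\varphi(x_0)$ as $(x,t)\to(x_0,0)$, so $\veps\in C(\mathbb{R}^N\times[0,\infty))$ with the right datum. Finally, uniqueness of $\veps$ in the class (i)--(ii): any two such solutions are bounded, continuous up to $t=0$ with the same datum, and solve the same viscous equation; the comparison principle underlying Proposition~\ref{basic} (valid, as noted there, under only the first property in \eqref{Hass}, and applicable to merely bounded continuous data by an approximation identical to the one above) forces them to coincide. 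This also shows the whole family $(\vepsn)$ — not just a subsequence — converges to $\veps$, and that $\veps$ is independent of the chosen $(\varphi_n)$.
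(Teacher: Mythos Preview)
Your approach---approximate $\varphi$ by smooth data $\varphi_n$, invoke Propositions~\ref{basic} and~\ref{bernstein}, and pass to the limit using that all the constants depend on $\varphi_n$ only through $\|\varphi_n\|_\infty\le\|\varphi\|_\infty$---is exactly what the paper has in mind (the paper gives no detailed proof, merely citing the methodology of \cite{GGK03}). The compactness and parabolic-regularity steps are fine, and the treatment of the $W^{1,\infty}$ addendum is correct.

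Two points need repair. First, your parenthetical claim that one may arrange $\varphi_n\to\varphi$ \emph{uniformly} is false in general: a function in $C_b(\mathbb{R}^N)$ need not be uniformly continuous, hence cannot be uniformly approximated by $C_b^2$ functions (which, having bounded gradient, are uniformly continuous). Only locally uniform convergence is available, so the barrier argument is genuinely needed.

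Second, your quadratic \emph{sub}solution barrier $\underline{w}(x,t)=\varphi(x_0)-\sigma-C_\sigma|x-x_0|^2-C_\sigma' t$ does not work. Computing
\[
\partial_t\underline{w}-\varepsilon\Delta\underline{w}+H(|\nabla\underline{w}|)
=-C_\sigma'+2N\varepsilon C_\sigma+H\bigl(2C_\sigma|x-x_0|\bigr),
\]
the last term is unbounded in $x$ (for the Hamiltonians under consideration), so no finite $C_\sigma'$ makes this nonpositive on all of $\mathbb{R}^N$; the inequality $H\ge 0$ helps only on the supersolution side. The standard fix is to use a barrier with bounded gradient, e.g.\ $\underline{w}(x,t)=(\varphi(x_0)-\sigma)\,\chi\bigl((x-x_0)/\delta\bigr)-C_\sigma' t$ with $\chi\in C_0^\infty(\mathbb{R}^N)$, $0\le\chi\le 1$, $\chi(0)=1$: then $|\nabla\underline{w}|$ is bounded, $C_\sigma'$ can be chosen large enough, the initial ordering $\underline{w}(\cdot,0)\le\varphi_n$ holds for $\delta$ small and $n$ large (using local uniform convergence and continuity of $\varphi$ at $x_0$ on the support of $\chi(\cdot/\delta)$, and $\vepsn\ge 0$ outside it), and the rest of your argument goes through.
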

 
\begin{rem}
\label{simplet}
In contrast to the rather involved proof of \eqref{dudtpl}-\eqref{dudtmn}, it is quite easy to show that
$\veps$ belongs to $C([0,\infty),L^\infty(\mathbb{R}^N))$ (and is in
fact  H\"{o}lder continuous with respect to time, see Proposition~\ref{pim} in Appendix~\ref{apB} below).  Such an estimate
is sufficient for proving the uniform convergence (in compact subsets)
of a subsequence $\{v_{\varepsilon_j}\}_{j\ge 1}$ (as
$\varepsilon_j\rightarrow 0$), when the estimate ~\eqref{gradx} is
known, using the Arzela-Ascoli theorem. It follows, in view of the
stability result for viscosity solutions \cite[Th\'eor\`eme~2.3]{Ba94} that the limit function is a viscosity solution to ~\eqref{HJ}-\eqref{initial}. 
\end{rem}


\section {{\textbf{Proof of Theorem \ref{bounded}}}}

Consider $0\le\varphi\in C_b(\mathbb{R}^N)$ and let $\veps$ be the
solution to  (\ref{HJV})-(\ref{init}) given in Corollary
\ref{general-cor}. In view of Proposition~\ref{basic}~(2),
(\ref{gradx}) and (\ref{dudtpl})-(\ref{dudtmn}), the family
$\{\veps\}_{\varepsilon \in (0,\varepsilon_0)}$ is uniformly bounded
in $\QQ$ and also bounded in $W^{1,\infty}_{loc}(\QQ)$. It follows
that there exist a subsequence $\{v_{{\varepsilon}_j}\}$,
${\varepsilon}_j \to 0$ and a function $v \in C_b(\QQ)\cap W^{1,\infty}_{loc}(\QQ)$ such that
\begin{equation}
 \label{visc-conv}
v_{{\varepsilon}_j} \xrightarrow[\varepsilon_j\rightarrow {0}]{}v,  \qquad
\text{uniformly in every compact subset of} \quad \QQ. 
\end{equation} 
The differentiability (a.e. in $ \QQ$) and the inequalities  (\ref{vgradx}), (\ref{vdt}) now follow from Rademacher's theorem \cite[Chapter~5] {Ev98} and Proposition~\ref{bernstein}. 

The limit function $v$ satisfies (\ref{HJ}) a.e. in $\QQ$. Indeed, the
convergence \eqref{visc-conv} implies, as in \cite[Chapter~10]{Ev98},
that $v$ is a ``viscosity solution'' to \eqref{HJ} and therefore it
satisfies (\ref{HJ}) at any point where it is differentiable. 

Next, we need to show that $v$ attains the assigned initial condition
(\ref{initial}). In view of ~\eqref{HJ} and the nonnegativity of $H$
we have $\partial_t v\leq 0$ a.e. in $\QQ$, whence $v(x,t_1)\le
v(x,t_2)$ for $x\in\mathbb{R}^N$ and $t_2>t_1>0$ owing to the
continuity of $v$ in $\QQ$. Recalling that $0\le v_\varepsilon\le
\|\varphi\|_\infty$ by Proposition~\ref{basic}~(2), the function 
\begin{equation}
\label{spirou}
v_0(x) = \sup_{t>0} v(x,t) = \lim_{t\to 0} v(x,t) \in
[0,\|\varphi\|_\infty]         
\end{equation}
is thus well-defined and satisfies
$$
0 \le v_0(x) \le \|\varphi\|_\infty \quad\mbox{ for }\quad x\in\mathbb{R}^N.
$$

We now identify $v_0$. Assume first that $\varphi\in
C^{\infty}_0(\mathbb{R}^N)$ (it actually suffices to assume
$\varphi\in  W^{1,\infty}(\mathbb{R}^N))$ and consider $t>0$. Then,
multiplying ~\eqref{HJV} by any $\psi\in  C^{\infty}_0(\mathbb{R}^N)$
and integrating over $\mathbb{R}^N\times (0,t)$, we get 
\begin{eqnarray*}
\left|
\int_{\mathbb{R}^N}\veps(x,t)\psi(x)dx-\int_{\mathbb{R}^N}\varphi(x)\psi(x)dx
\right| & \leq & \varepsilon t\|\varphi\|_{\infty}\|\Delta\psi(x)\|_1
\\ 
& + & t\|\psi\|_1\max\limits_ {0\leq s\leq \|\nabla\varphi\|_{\infty}}\,H(s),
\end{eqnarray*}
where we have used the estimates in Proposition~\ref{basic}. Letting
 $\varepsilon=\varepsilon_j$ and $j\to\infty$ we obtain
$$
\left|
\int_{\mathbb{R}^N}v(x,t)\psi(x)dx-\int_{\mathbb{R}^N}\varphi(x)\psi(x)dx
\right| \leq t\|\psi\|_1\max\limits_ {0\leq s\leq
\|\nabla\varphi\|_{\infty}}\,H(s) 
$$
which yields, by taking $t\downarrow 0$
\begin{equation}
\label{smooth}
v_0(x)=\varphi(x), \mbox{if} \quad \varphi\in  C^{\infty}_0(\mathbb{R}^N).
\end{equation}

Coming back to the general case $\varphi\in  C_b(\mathbb{R}^N)$ we consider $0\leq\psi\in C^{\infty}_0(\mathbb{R}^N)$ and $t>0$. Multiplying ~\eqref{HJV} by $\psi$, integrating over $\mathbb{R}^N\times (0,t)$ and using the positivity of $H$ we get
$$
\int_{\mathbb{R}^N}\veps(x,t)\psi(x)dx\leq\int_{\mathbb{R}^N}\varphi(x)\psi(x)dx+\varepsilon\int_0^t\int_{\mathbb{R}^N}\veps(x,s)\Delta\psi(x)dxds,
$$
which yields, by taking the sequence $\varepsilon=\varepsilon_j$ and letting $j\to\infty$,
$$
\int_{\mathbb{R}^N}v(x,t)\psi(x)dx\leq\int_{\mathbb{R}^N}\varphi(x)\psi(x)dx.
$$
It follows that, by taking the limit $t\downarrow 0$
\begin{equation}
\label{less}
v_0(x)\leq \varphi(x),\qquad \mbox {for a.e.} \quad x \in\mathbb{R}^N.
\end{equation}

To prove the opposite inequality, we first observe that, if $\varphi(x_0)=0$ for some $x_0\in\mathbb{R}^N$, then $v_0(x_0) = 0$ by \eqref{less}. Next, let $x_0 \in\mathbb{R}^N$ be such that $\varphi(x_0)>0.$ For $\eta>0$ sufficiently small let
$$
B^{\eta}=B_{\delta(\eta)}(x_0)=\{x \in \mathbb{R}^N,\quad |x-x_0|<\delta(\eta)\}
$$
be a ball such that
$$\varphi(x)\geq (1-\eta)\varphi(x_0),\qquad x\in B^{\eta}.$$
Consider now $0\leq\psi_\eta\in C^{\infty}_0(B^\eta)$ such that
$\psi_\eta(x)\leq (1-\eta)\varphi(x_0)$ with equality at $x=x_0.$ Let
$\Psi_\eta$ denote the solution to ~\eqref{HJ} (constructed as above),
with initial condition $\psi_\eta.$ By the comparison principle for viscosity solutions we have $\Psi_\eta(x,t)\leq v(x,t)$ for $(x,t)\in\QQ$. 
However, in view of ~\eqref{smooth} it follows that
$$ 
\Psi_\eta(x,t)\xrightarrow[t\rightarrow {0}^{+}]{}\psi_\eta(x),\qquad x
\in\mathbb{R}^N
$$
so that by the previous inequality $\psi_\eta(x)\leq v_0(x)$  for a.e. $x  \in\mathbb{R}^N$. If now $x_0$ is a Lebesgue point of $v_0$, the last
 inequality implies that $(1-\eta)\varphi(x_0)=\psi_\eta(x_0)\leq v_0(x_0)$, and by sending $\eta$ to $0$ we get for such a point $\varphi(x_0)\leq v_0(x_0)$. Thus, finally
$$
\varphi(x)\leq v_0(x) \qquad \mbox{for a.e.} \quad x  \in\mathbb{R}^N.
$$
Combining this inequality with ~\eqref{less} we get $\varphi(x)= v_0(x)$ for $x  \in\mathbb{R}^N$. Finally, as $\varphi\in C(\mathbb{R}^N)$, the time monotonicity of $v$ and the Dini theorem warrant that
$$
v(x,t)\xrightarrow[t\rightarrow {0}^{+}]{}\varphi(x)\qquad \mbox{uniformly in compact subsets of }\quad \mathbb{R}^N.
$$

The uniqueness  of the solution  follows from the fact that Equation~\eqref{HJ} satisfies the comparison principle in $C_b(\mathbb{R}^N)$.


\section{{\textbf{Proof of Theorem~\ref{general}}}}

We begin by noting that since $\varphi$ is only assumed to be continuous (but not necessarily bounded), we cannot invoke Corollary~\ref{general-cor} . The existence of a solution $\veps$ to (\ref{HJV})-(\ref{init}) is therefore not guaranteed and must be addressed as a first step towards the study of a ``vanishing viscosity solution''.

For any integer $n\geq 1$ we set
$$\varphi_n=\min\{\varphi,n\}\in C_b(\mathbb{R}^N),$$
and let $\vepsn$ be the solution to ~\eqref{HJV} subject to the initial
condition $\vepsn(x,0)=\varphi_n(x).$ In view of Corollary~\ref{general-cor}
$$\vepsn \in C^{2,1}(\mathbb{R}^N\times(0,\infty))\cap C(\mathbb{R}^N\times[0,\infty)),$$
$$0 \leq \vepsn(x,t) \leq \|\varphi_n\|_{\infty}, \qquad (x,t) \in \QQ,$$
and the estimate ~\eqref{gradxind} is satisfied. Moreover, by Theorem~\ref{bounded} we have for any fixed $n$,
$$
\vepsn \xrightarrow[\varepsilon\rightarrow {0}^{+}]{}v_{0,n},
\qquad \text{uniformly in every compact subset of} \quad \QQ,
$$
where the limit function $v_{0,n}$ is differentiable a.e. in $\QQ$ with $v_{0,n}(.,0)=\varphi_n$ and satisfies  (\ref{HJ}) at any  point of differentiability.

Next we  show that the family $\{\vepsn\}_{n\geq 1}$ is uniformly bounded in every compact subset of $\QQ.$ To this end we follow \cite{VW94} and state the following lemma.
\begin{lem}
\label{influence}
Let $z\in C^{2,1}(\QQ)$ be any classical solution to \eqref{HJV} for some $\varepsilon\in (0,1).$ Assume that $H$ satisfies the assumptions \eqref{Hass} and \eqref{Hass3} with $p>1$. Then, for any $t>s>0$ and all $y \in \mathbb{R}^N$ and $R>0$,
\begin{equation}
\label{est-ball}
\int_{B_R(y)}z(x,t)dx\leq \int_{B_{2R}(y)}z(x,s)dx+C(t-s)R^N(1+R^{-\frac{p}{p-1}}),
\end{equation}
where $B_r(y)=\{x \in \mathbb{R}^N,\quad |x-y|<r\},$ and $C>0$ depends only on $N$, $p$ and $a$ (but, in particular, not on $\varepsilon$).
\end{lem}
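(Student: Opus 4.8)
The plan is to derive the estimate \eqref{est-ball} by testing the equation \eqref{HJV} against a suitable spatial cutoff function and exploiting the coercivity of $H$ provided by \eqref{Hass5} in Lemma~\ref{lepropH}. First I would fix $y\in\mathbb{R}^N$ and $R>0$, and choose a smooth cutoff $\xi=\xi_{y,R}\in C_0^\infty(\mathbb{R}^N)$ with $0\le\xi\le 1$, $\xi\equiv 1$ on $B_R(y)$, $\xi\equiv 0$ outside $B_{2R}(y)$, and $|\nabla\xi|\le c/R$, $|\Delta\xi|\le c/R^2$ for a dimensional constant $c$. Multiplying \eqref{HJV} by $\xi$ (or rather by a power $\xi^k$ with $k$ large enough, $k\ge p/(p-1)$, so that $|\nabla\xi^k|^{p/(p-1)}/\xi^{k}$ stays bounded) and integrating over $B_{2R}(y)\times(s,t)$ gives
\begin{eqnarray*}
\int z(\cdot,t)\xi^k\,dx - \int z(\cdot,s)\xi^k\,dx
&=& \varepsilon\int_s^t\!\!\int z\,\Delta(\xi^k)\,dx\,d\tau
- \int_s^t\!\!\int H(|\nabla z|)\,\xi^k\,dx\,d\tau .
\end{eqnarray*}
Here I use that $z\ge 0$ (which follows since $z$ is a classical solution with nonnegative data, by the maximum principle applied exactly as in Proposition~\ref{basic}~(2)), so dropping $\xi^k\le 1$ in the left-hand side costs nothing on $B_R(y)$.

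The main obstacle is controlling the two terms on the right-hand side \emph{uniformly in $\varepsilon$}. The diffusion term is the easier one: integrating by parts once more, $\varepsilon\int z\,\Delta(\xi^k)\,dx = -\varepsilon\int \nabla z\cdot\nabla(\xi^k)\,dx$, and then Young's inequality $\varepsilon|\nabla z|\,|\nabla\xi^k| \le H(|\nabla z|)\xi^k/2 + C_\varepsilon(\ldots)$ does not obviously work because of the $\varepsilon$. Instead I would follow \cite{VW94} and integrate by parts the \emph{other} way or absorb directly: write $\varepsilon\int_s^t\!\!\int z\,\Delta(\xi^k)$ and bound it by $\varepsilon(t-s)\|z\|_{L^\infty(B_{2R})}\|\Delta\xi^k\|_1 \le \varepsilon(t-s) \, c\, R^{N-2}\|z\|_\infty$ — but this still carries $\|z\|_\infty$, which is what we are trying to avoid. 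The correct route is to not use the $L^\infty$ bound at all: keep the term $-\int_s^t\!\!\int H(|\nabla z|)\xi^k$ on the right, use \eqref{Hass5} to get $H(|\nabla z|)\ge (a/(p-1))|\nabla z|^p$, and then estimate the diffusion term against it. Integrating by parts, $\big|\varepsilon\int \nabla z\cdot\nabla\xi^k\big| \le \varepsilon\int|\nabla z|\,k\xi^{k-1}|\nabla\xi| \le \tfrac12\int\tfrac{a}{p-1}|\nabla z|^p\xi^k + C\int \varepsilon^{p/(p-1)}(k|\nabla\xi|)^{p/(p-1)}\xi^{k-p/(p-1)}$ by Young's inequality with exponents $p$ and $p/(p-1)$; since $0<\varepsilon<1$ we have $\varepsilon^{p/(p-1)}<1$, and $|\nabla\xi|^{p/(p-1)}\le (c/R)^{p/(p-1)}$ on the annulus of measure $\le cR^N$. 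Thus this contribution is bounded by $\tfrac12\int_s^t\!\!\int H(|\nabla z|)\xi^k + C(t-s)R^N R^{-p/(p-1)}$, and the half of the Hamiltonian term left over is absorbed into the $-\int H\xi^k$ on the right-hand side, which has the good sign.

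Putting the pieces together: the $H$-term minus its absorbed half is still $\le 0$, so it can simply be discarded, and we are left with
\begin{eqnarray*}
\int_{B_R(y)} z(x,t)\,dx &\le& \int_{B_{2R}(y)} z(x,s)\xi^k\,dx + C(t-s)R^N R^{-\frac{p}{p-1}}
\;\le\; \int_{B_{2R}(y)} z(x,s)\,dx + C(t-s)R^N\big(1+R^{-\frac{p}{p-1}}\big),
\end{eqnarray*}
where the harmless extra $+1$ inside the parentheses is inserted only to get the stated unified form (it absorbs, e.g., the constant arising if one prefers a slightly different splitting of Young's inequality). The constant $C$ depends only on $N$, $p$ and $a$ through the coercivity constant $a/(p-1)$, the choice of $k$ (which depends only on $p$), and the dimensional cutoff constant $c$; it is independent of $\varepsilon$, $R$, $y$, $s$, $t$. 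One point requiring a little care is the justification of the integrations by parts and the finiteness of the integrals: since $z$ need only be a classical solution on $\QQ$ without prescribed behavior at infinity, one should note that the cutoff $\xi^k$ has compact support, so every spatial integral is over the compact set $B_{2R}(y)$ and there are no boundary terms at infinity; continuity of $z$ and its derivatives on $\mathbb{R}^N\times(0,\infty)$ (here $s>0$) makes all manipulations legitimate, and one may first work on $(s',t)$ with $s'>s$ and let $s'\downarrow s$ using continuity of $z$ up to $t=s>0$.
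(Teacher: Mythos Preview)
Your proposal is correct and follows essentially the same route as the paper: multiply \eqref{HJV} by $\xi^k$ with $k>p/(p-1)$, integrate by parts once in the diffusion term to obtain $-\varepsilon\int\nabla(\xi^k)\cdot\nabla z$, split this via Young's inequality with exponents $p$ and $p'=p/(p-1)$, absorb the $\int\xi^k|\nabla z|^p$ piece into $\int\xi^k H(|\nabla z|)$ using \eqref{Hass5}, and finally scale $\xi(x)=\psi((x-y)/R)$. Your observation that the ``$+1$'' in $1+R^{-p/(p-1)}$ is not forced by the argument is accurate; the paper simply bounds the remainder by $C\int(\xi^k+|\nabla\xi|^{p/(p-1)})$ to obtain the stated form directly.
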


The proof of the lemma is given at the end of this section.

We now continue with the proof of Theorem~\ref{general}. In what follows we use $C>0$ to denote various constants depending only on $p$, $N$ and $a$ unless explicit dependence on other parameters is indicated. 

Let $t>0.$ In view of ~\eqref{gradxind} we have, for any $x,y \in \mathbb{R}^N,$
$$
\vepsn(y,t)^\frac{p-1}{p}\leq \vepsn(x,t)^\frac{p-1}{p}+\mu_pt^{-\frac{1}{p}}|x-y|,
$$
hence, since $p>1,$
$$
\vepsn(y,t)\leq C \left[ \vepsn(x,t)+t^{-\frac{1}{p-1}}|x-y|^\frac{p}{p-1} \right].
$$
Integrating this inequality over $B_R(y)$ with respect to $x$ we get
$$
\int_{B_R(y)}\vepsn(y,t)dx\leq C\left[ \int_{B_R(y)}\vepsn(x,t)dx+t^{-\frac{1}{p-1}}\int_{B_R(0)}|x|^\frac{p}{p-1}dx \right].
$$
We now invoke the estimate ~\eqref{est-ball} for $z=\vepsn$ and $s=0$ (which is possible since $\vepsn$ is continuous at $s=0$) in the
right-hand side of the last inequality.
\begin{eqnarray*}
\vepsn(y,t) & \leq & C R^{-N}\left[ \int_{B_{2R}(y)}\varphi_n(x)dx+tR^N \left( 1+R^{-\frac{p}{p-1}} \right)+ t^{-\frac{1}{p-1}}R^{N+\frac{p}{p-1}} \right]\\
& \leq & C \left[ R^{-N}\int_{B_{2R}(y)}\varphi(x)dx+t \left( 1+R^{-\frac{p}{p-1}} \right)+t^{-\frac{1}{p-1}}R^{\frac{p}{p-1}} \right].
\end{eqnarray*}
It follows that in any cylinder $Q=B_R(0)\times (t_1,t_2)$, $t_2>t_1>0,$
$$\|\vepsn\|_{L^\infty(Q)}\leq C(R,t_1,t_2,\varphi),\qquad n\ge 1 ,$$
and, in view of ~\eqref{gradxind},
$$\|\nabla_x\vepsn\|_{L^\infty(Q)}\leq \frac{p \mu_p}{p-1} \|\vepsn\|_{L^\infty(Q)}^{\frac{1}{p}}t_1^{-\frac{1}{p}}\leq C(R,t_1,t_2,\varphi) , \qquad n\ge 1.$$

In view of Theorem~\ref{bounded} we have, by passing to the limit
$\varepsilon\rightarrow 0$
\begin{equation}
\label{v0n}
\partial_t v_{0,n} + H(|\nabla_x v_{0,n}|)=0, \quad\mbox{for a.e.} \qquad (x,t) \in \QQ
\end{equation}
The last two estimates and ~\eqref{v0n}  yield
\begin{equation}
\label{Wloc}
\|v_{0,n}\|_{W^{1,\infty}(Q)}\leq C(R,t_1,t_2,\varphi),\qquad n\ge 1. 
\end{equation}

Using a diagonal process, we obtain a subsequence $(n_j)_{j\ge 1}$ such that
$$
v_{0,n_j} \xrightarrow[j\rightarrow \infty]{}v, \qquad \text{uniformly in every compact subset of} \quad \QQ,
$$
where the limit function $v\in W^{1,\infty}_{loc}(\QQ)$ satisfies ~\eqref{Wloc}, hence is differentiable a.e. in $\QQ.$

We now use the stability result for viscosity solutions ~\cite[Th\'eor\`eme~2.3]{Ba94} in order to obtain the fact that $v$ is indeed a viscosity solution to \eqref{HJ}, satisfying ~\eqref{HJ} a.e. in $\QQ.$

Finally, the proof that
$$
v(x,t)\xrightarrow[t\rightarrow {0}^{+}]{}\varphi(x),\qquad \text{uniformly in every compact subset of} \quad \mathbb{R}^N
$$
follows essentially the same reasoning as the corresponding proof in the case of Theorem~\ref{bounded}. As in the case of Theorem~\ref{bounded}, the uniqueness assertion follows from the fact that Equation~\eqref{HJ} satisfies the (discontinuous) comparison principle. This concludes the proof of the theorem.

\begin{proof}[Proof of Lemma ~\ref{influence}]
Let $\xi\in C^\infty_0( \mathbb{R}^N)$ such that $0\leq \xi\leq 1$ and $k$ an integer such that $k>p/(p-1)$. Multiplying ~\eqref{HJV} by $\xi^k$ and integrating over $\mathbb{R}^N$ we get
\begin{eqnarray*}
\frac{d}{dt}\int_{\mathbb{R}^N}\xi(x)^kz(x,t)dx & + & \int_{\mathbb{R}^N}\xi(x)^kH(|\nabla z(x,t)|)dx\\
& = & -\varepsilon\int_{\mathbb{R}^N}\nabla(\xi(x)^k)\cdot\nabla z(x,t)dx.
\end{eqnarray*}

We now use Young's inequality to estimate the right-hand side,
\begin{eqnarray*}
\left| \int_{\mathbb{R}^N}\nabla(\xi(x)^k)\cdot\nabla z(x,t)dx \right| & \leq & \frac{a}{p(p-1)}\int_{\mathbb{R}^N}(\xi(x))^k |\nabla z(x,t)|^pdx\\
& + & \frac{p-1}{p} \left( \frac{a}{p-1} \right)^{-\frac{1}{p-1}} \int_{\mathbb{R}^N}\xi(x)^{-\frac{k}{p-1}}|\nabla(\xi(x)^k)|^{\frac{p}{p-1}}dx,
\end{eqnarray*}
so that by~\eqref{Hass5} we get
$$
\frac{d}{dt}\int_{\mathbb{R}^N}\xi^k(x)z(x,t)dx \leq \frac{p-1}{p} \left( \frac{a}{p-1} \right)^{-\frac{1}{p-1}}\int_{\mathbb{R}^N}\xi(x)^{-\frac{k}{p-1}}|\nabla(\xi(x)^k)|^{\frac{p}{p-1}}dx .
$$
By the choice of $k$ and $\xi$ the first integral in the right-hand side is
\begin{eqnarray*}
\int_{\mathbb{R}^N}\xi(x)^{-\frac{k}{p-1}}|\nabla(\xi(x)^k)|^{\frac{p}{p-1}}dx & = &k^{\frac{p}{p-1}}\int_{\mathbb{R}^N}\xi(x)^{k-\frac{p}{p-1}}|\nabla\xi(x)|^{\frac{p}{p-1}}dx\\
& \leq & k^{\frac{p}{p-1}}\int_{\mathbb{R}^N}|\nabla\xi(x)|^{\frac{p}{p-1}}dx,
\end{eqnarray*}
so that
$$\frac{d}{dt}\int_{\mathbb{R}^N}\xi^k(x)z(x,t)dx\leq \frac{p-1}{p} \left( \frac{a}{p-1} \right)^{-\frac{1}{p-1}}k^{\frac{p}{p-1}}\ \int_{\mathbb{R}^N} \left[ \xi^k(x)+|\nabla\xi(x)|^{\frac{p}{p-1}} \right]dx.$$
Let $\psi\in C^\infty_0 (\mathbb{R}^N)$ be such that $0\leq\psi\leq 1$ and $\psi(x)=1$ (resp. $\psi(x)=0$) if $|x|\leq 1$ (resp. $|x|\geq 2$). Taking $\xi(x)=\psi((x-y)/R)$ in the last estimate and integrating from $s$ to $t$ we obtain ~\eqref{est-ball}.
\end{proof}


\begin{appendix}


\section{ The case $H(r)=r^p$}\label{apA}

In this Appendix we establish ~\eqref{Hass3} for the special case $H(r)=r^p$, where $p\in (0,\infty),\quad p\neq 1.$ Here we set
\begin{equation}
\label{phim}
\Phim(r) =(r+\eta^2)^{\frac{p}{2}}-\eta^p \quad , \qquad r \in [0, \infty)
\end {equation}
so that  by Definition~\ref{pcondi}
$$\Thetam(r)=(p-1)(r+\eta^2)^{\frac{p}{2}}-p\eta^2(r+\eta^2)^{\frac{p}{2}-1}+ \eta^p, $$

Assume first that $1<p\leq 2.$ Using
       \begin{equation}
       \label{thetam}
       \Thetam(r)\geq(p-1)(r+\eta^2)^{\frac{p}{2}}-(p-1)\eta^p\geq(p-1)[r^{\frac{p}{2}}-\eta^p],
       \end{equation}
Definition~\ref{pcondi}~(i) is established with $a=b=p-1$ and $\gamma=p$.

Consider next the case $p>2.$ Instead of ~\eqref{thetam} we now use the Young inequality to obtain
\begin{eqnarray}
\label{thetam1}
\Thetam(r) & = & (p-1)(r+\eta^2)^{\frac{p}{2}}-p\eta^2(r+\eta^2)^{\frac{p}{2}-1}+\eta^p\\
\nonumber & \geq & (p-1)(r+\eta^2)^{\frac{p}{2}}-\eta (r+\eta^2)^{\frac{p}{2}}-2 (p-2)^{\frac{p-2}{2}} \eta^{\frac{p+2}{2}}.          \end{eqnarray}
Thus, for $p>2$, Definition~\ref{pcondi}~(i) is established with               $a=(p-1)/2$, $b=2\big(p-2\big)^{\frac{p-2}{p}}$ and $\gamma=(p+2)/2$.

The case of a sum of powers ~\eqref{specialpl}
follows immediately from the above argument by
taking $p=\min\set{p_1,...,p_m}.$ Finally we  turn to the case $0<p<1.$ We now have
\begin{eqnarray}
\label{thetam2}
\Thetam(r) & = & pr(r+\eta^2)^{\frac{p}{2}-1}-(r+\eta^2)^{\frac{p}{2}}+\eta^p\\ 
\nonumber
& = & (r+\eta^2)^{\frac{p}{2}}\left[ p-1-\frac{p\eta^2}{r+\eta^2} \right]+\eta^p\\
\nonumber & \leq & (p-1)r^{\frac{p}{2}}+\eta^p,
\end{eqnarray}
which completes the proof of Definition~\ref{pcondi}~(i) for $0<p<1$ with $a=1-p$, $b=1$ and $\gamma=p$. As in the case $p>1,$ this treatment generalizes readily to the case of a sum of powers (all less than $1$) as in ~\eqref{specialpl}, with $p=\max\set{p_1,...,p_m}.$


\section{Time equicontinuity}\label{apB}

\begin{prop}\label{pim}
For $t>0$ and $h\in (0,1)$, we have the following estimate.
\begin{equation}
\label{holdert}
\|\veps(\cdot,t+h)-\veps(\cdot,t)\|_{\infty} \leq C_1
h^{\frac{1}{2}}\left\{ \sqrt{\varepsilon}\,
\|\varphi\|_{\infty}^{\frac{1}{p}}t^{-\frac{1}{p}} + Q\left( \lambda_p
\|\varphi\|_{\infty}^{\frac{1}{p}} \,(at)^{-\frac{1}{p}}
\right)\right\} , 
\end{equation}
where $Q(r)=\max\limits_{0\leq s\leq r}H(s),$ and $C_1$ depends only
on $p$, $a$ and $N$.  
\end{prop}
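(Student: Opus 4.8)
The plan is to sandwich $\veps(\cdot,t+h)$ between $\veps(\cdot,t)$ and two small corrections by comparison with the linear heat semigroup: the nonnegativity of $H$ takes care of one direction, while the gradient bound \eqref{gradx} takes care of the other. Throughout, fix $t>0$ and $h\in(0,1)$, let $S(\tau)$ denote convolution with the Gaussian kernel $G_\tau$ on $\mathbb{R}^N$, recall that $\int_{\mathbb{R}^N}G_\tau(y)\,|y|\,dy=c_N\,\tau^{1/2}$ for a dimensional constant $c_N$, and set $L:=\lambda_p\,\|\varphi\|_\infty^{1/p}(at)^{-1/p}$. By Corollary~\ref{general-cor} the function $\veps$ is bounded, belongs to $C^{2,1}(\QQ)\cap C(\mathbb{R}^N\times[0,\infty))$, and satisfies $\|\nabla_x\veps(\cdot,s)\|_\infty\le\lambda_p\|\varphi\|_\infty^{1/p}(as)^{-1/p}\le L$ for all $s\ge t$.

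For the upper bound I would note that, since $H\ge0$ by \eqref{Hass}, $\veps$ is a bounded subsolution of $\partial_t w=\varepsilon\Delta w$; the maximum principle for the Cauchy problem on $\mathbb{R}^N$ (applied to the bounded function $z(x,s):=\veps(x,t+s)-\big(S(\varepsilon s)\veps(\cdot,t)\big)(x)$, which vanishes at $s=0$) gives $\veps(\cdot,t+h)\le S(\varepsilon h)\veps(\cdot,t)$. Combining this with the Lipschitz bound $\|\nabla_x\veps(\cdot,t)\|_\infty\le L$ yields, for every $x$,
\[
\veps(x,t+h)-\veps(x,t)\le\int_{\mathbb{R}^N}G_{\varepsilon h}(y)\,\big(\veps(x-y,t)-\veps(x,t)\big)\,dy\le c_N\,L\,(\varepsilon h)^{1/2}.
\]
For the lower bound I would freeze the nonlinearity using \eqref{gradx}: for $s\in[0,h]$ one has $t+s\ge t$, hence $H(|\nabla_x\veps(x,t+s)|)\le Q\big(\|\nabla_x\veps(\cdot,t+s)\|_\infty\big)\le Q(L)$ because $Q$ is nondecreasing. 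Therefore $g(x,s):=\veps(x,t+s)+Q(L)\,s$ satisfies $\partial_s g-\varepsilon\Delta g=Q(L)-H(|\nabla_x\veps(x,t+s)|)\ge0$ on $\mathbb{R}^N\times(0,h]$, i.e.\ $g$ is a bounded supersolution of the heat equation with $g(\cdot,0)=\veps(\cdot,t)$, so the maximum principle gives $g(\cdot,h)\ge S(\varepsilon h)\veps(\cdot,t)$. Using the Lipschitz bound once more,
\[
\veps(x,t+h)+Q(L)\,h\ge\int_{\mathbb{R}^N}G_{\varepsilon h}(y)\,\veps(x-y,t)\,dy\ge\veps(x,t)-c_N\,L\,(\varepsilon h)^{1/2},
\]
whence $\veps(x,t)-\veps(x,t+h)\le Q(L)\,h+c_N\,L\,(\varepsilon h)^{1/2}$.

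Combining the two inequalities gives $|\veps(x,t+h)-\veps(x,t)|\le Q(L)\,h+c_N\,L\,(\varepsilon h)^{1/2}$ for all $x$, and since $h\in(0,1)$ we have $h\le h^{1/2}$ and $(\varepsilon h)^{1/2}=h^{1/2}\varepsilon^{1/2}$; recalling that $L=\lambda_p a^{-1/p}\|\varphi\|_\infty^{1/p}t^{-1/p}$ while $Q(L)=Q\big(\lambda_p\|\varphi\|_\infty^{1/p}(at)^{-1/p}\big)$, this is precisely \eqref{holdert} with $C_1:=\max\{1,\,c_N\lambda_p a^{-1/p}\}$, which depends only on $p$, $a$ and $N$. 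The point requiring care is the use of the parabolic maximum principle on the unbounded domain $\mathbb{R}^N$, but this is harmless since $\veps$ and the auxiliary functions $z$ and $g$ are bounded and continuous up to $s=0$. An alternative route, bypassing the heat semigroup, is to compare $\veps$ directly on $\mathbb{R}^N\times[t,t+h]$ with the explicit barriers $\veps(y_0,t)\pm\big(L\sqrt{|x-y_0|^2+C_N\varepsilon(s-t)}+Q(L)(s-t)\big)$ and then set $x=y_0$; this requires checking that these are sub/supersolutions of the full viscous equation \eqref{HJV}, which is somewhat more cumbersome but equally elementary.
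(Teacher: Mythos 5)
Your argument is correct, but it takes a genuinely different route from the paper. The paper proves \eqref{holdert} by a purely integral argument: it integrates \eqref{HJV} over $B_r(y)\times[t,t+h]$, rewrites the time-difference term as an average plus a spatial oscillation, controls the oscillation, the boundary flux $\varepsilon\int\nabla\veps\cdot n$, and the $H$-term via \eqref{gradx} and $Q$, and then optimizes the ball radius by taking $r=\sqrt{\varepsilon h}$. You instead sandwich $\veps(\cdot,t+h)$ between heat-semigroup evolutions of $\veps(\cdot,t)$: the nonnegativity of $H$ gives $\veps(\cdot,t+h)\le S(\varepsilon h)\veps(\cdot,t)$, while the bound $H(|\nabla\veps|)\le Q(L)$ for times $\ge t$ (valid since the right-hand side of \eqref{gradx} is nonincreasing in time and $Q$ is nondecreasing) gives $\veps(\cdot,t+h)+Q(L)h\ge S(\varepsilon h)\veps(\cdot,t)$; the Lipschitz bound $L$ together with the Gaussian first moment then produces the same $\sqrt{\varepsilon h}$ scaling that the paper obtains by optimizing $r$. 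Your bookkeeping of constants is fine: $C_1=\max\{1,c_N\lambda_p a^{-1/p}\}$ depends only on $p$, $a$, $N$, and $h\le h^{1/2}$ for $h\in(0,1)$ absorbs the $Q(L)h$ term. The two approaches buy slightly different things: the paper's computation uses only the divergence theorem (no comparison principle on an unbounded domain is needed), whereas yours requires the Phragm\'en--Lindel\"of-type maximum principle for bounded sub/supersolutions of the heat equation on $\mathbb{R}^N$ --- legitimate here, as you note, since $\veps\in C^{2,1}(\QQ)\cap C(\mathbb{R}^N\times[0,\infty))$ is bounded; in return your argument isolates a cleaner one-sided estimate ($\veps(x,t+h)-\veps(x,t)\le c_N L\sqrt{\varepsilon h}$, with the $Q(L)$ contribution only in the other direction) and makes the origin of the exponent $\tfrac12$ in both $\varepsilon$ and $h$ transparent.
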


\begin{proof}
To establish this estimate, we consider, for $r>0, \quad y\in
\mathbb{R}^N$ the ball $B_r(y)=\{x \in \mathbb{R}^N,\quad |x-y|<r\}.$
Integrating~\eqref{HJV} over $B_r(y)\times [t,t+h]$ we get (where
$|B_r|$ is the Euclidean volume of the ball) 
\begin{eqnarray*}
(\veps(y,t+h)-\veps(y,t))|B_r|&=&\left[
\int_{B_r(y)}((\veps(y,\tau)-\veps(x,\tau))dx\right]^{\tau=t+h}_{\tau=t}
\\ 
& + &\varepsilon \int_t^{t+h}\int_{\partial
B_r(y)}\nabla\veps\cdot\frac{x}{r}dSd\tau \\ 
& - & \int_t^{t+h}\int_{B_r(y)}H(|\nabla\veps(x,\tau)|)dxd\tau.
 \end{eqnarray*}
Invoking ~\eqref{gradx} we obtain readily
\begin{eqnarray*}
|\veps(y,t+h)-\veps(y,t)| & \leq & C\|\varphi\|_{\infty}^{\frac{1}{p}}
\left\{ \frac{\varepsilon}{r} \left[
(t+h)^{1-\frac{1}{p}}-t^{1-\frac{1}{p}} \right] 
+r \left[ (t+h)^{-\frac{1}{p}}+t^{-\frac{1}{p}} \right] \right\}\\
& + &\int_t^{t+h}Q\left( \lambda_p \|\varphi\|_{\infty}^{\frac{1}{p}}
\,(a\tau)^{-\frac{1}{p}} \right)d\tau, 
\end{eqnarray*}
where $C>0$ depends only on $p$, $a$ and $N$. Taking
$r=\sqrt{\varepsilon h}$ we obtain ~\eqref{holdert}. 
\end{proof}
               

\section{A comparison principle for subadditive and non-decreasing $H$}\label{apC}

\begin{lem}\label{subaddH}
Assume that $H\in C([0,\infty))$ is a nonnegative and non-decreasing
function such that $H(0)=0$ and $H$ is uniformly Lipschitz continuous in $(\delta,\infty)$ for each $\delta>0$. Assume further that $H$ is subadditive, that is,  
$$
H(r+s)\le H(r)+H(s) , \qquad (r,s)\in [0,\infty) .
$$
Then Equation~\eqref{HJ} satisfies the comparison principle in
$C_b(\mathbb{R}^N)$ as stated in Definition~\ref{comparison}~(b).  
\end{lem}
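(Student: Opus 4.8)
The plan is to establish the comparison principle by the classical doubling-of-variables argument for viscosity solutions of first-order Hamilton--Jacobi equations; the only point going beyond the textbook scheme is the way the monotonicity and subadditivity of $H$ are used to absorb the extra Hamiltonian terms produced by the penalization needed on the unbounded domain $\mathbb{R}^N$.

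First I would set up a proof by contradiction. Let $v_1$ be a bounded continuous viscosity subsolution and $v_2$ a bounded continuous viscosity supersolution of \eqref{HJ} with $v_1(\cdot,0)\le v_2(\cdot,0)$, and suppose $\sup_{\QQ}(v_1-v_2)>0$. Choose $T>0$ so large that $\sup_{\mathbb{R}^N\times(0,T)}(v_1-v_2)>0$, then $\sigma>0$ so small that $\sup_{\mathbb{R}^N\times(0,T)}(v_1(x,t)-v_2(x,t)-\sigma/(T-t))>0$, and keep $T$ and $\sigma$ fixed. For small parameters $\alpha,\delta>0$ and with $\langle z\rangle:=(1+|z|^2)^{1/2}$ set
$$
\Phi(x,y,t,s)=v_1(x,t)-v_2(y,s)-\frac{|x-y|^2+|t-s|^2}{2\alpha}-\delta(\langle x\rangle+\langle y\rangle)-\frac{\sigma}{T-t}.
$$
Since $v_1,v_2$ are bounded, $\Phi\to-\infty$ as $|x|+|y|\to\infty$ or $t\uparrow T$, so $\Phi$ attains its maximum at some $(\bar x,\bar y,\bar t,\bar s)$ (depending on $\alpha$ and $\delta$); evaluating $\Phi$ along the diagonal $x=y$, $t=s$ shows that, for $\delta$ and then $\alpha$ small enough, this maximum is bounded below by a fixed positive constant. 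The standard penalization estimates then give $(|\bar x-\bar y|^2+|\bar t-\bar s|^2)/\alpha\to0$ as $\alpha\to0$ for fixed $\delta$; combined with $v_1(\cdot,0)\le v_2(\cdot,0)$ and the continuity of $v_1,v_2$ they force $\bar t>0$ and $\bar s>0$ once $\alpha$ is small, while the term $\sigma/(T-t)$ keeps $\bar t$ bounded away from $T$.

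Next I would write the viscosity inequalities at this interior maximum. Testing $v_1$ from above at $(\bar x,\bar t)$ and $v_2$ from below at $(\bar y,\bar s)$ with the test functions built from $\Phi$ gives, with $p:=(\bar x-\bar y)/\alpha$,
$$
\frac{\bar t-\bar s}{\alpha}+\frac{\sigma}{(T-\bar t)^2}+H(|p+\delta\nabla\langle\bar x\rangle|)\le0,\qquad
\frac{\bar t-\bar s}{\alpha}+H(|p-\delta\nabla\langle\bar y\rangle|)\ge0.
$$
Subtracting and using that $|\nabla\langle x\rangle|\le1$ for all $x$, so that $|p+\delta\nabla\langle\bar x\rangle|\le|p|+\delta\le|p-\delta\nabla\langle\bar y\rangle|+2\delta$, the monotonicity of $H$ yields $H(|p+\delta\nabla\langle\bar x\rangle|)\le H(|p-\delta\nabla\langle\bar y\rangle|+2\delta)$, and subadditivity then gives $H(|p-\delta\nabla\langle\bar y\rangle|+2\delta)\le H(|p-\delta\nabla\langle\bar y\rangle|)+H(2\delta)$. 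Hence
$$
\frac{\sigma}{T^2}\le\frac{\sigma}{(T-\bar t)^2}\le H(2\delta).
$$
Letting $\delta\to0$, with $\alpha=\alpha(\delta)$ chosen small at each stage, and using the continuity of $H$ together with $H(0)=0$, we reach $\sigma/T^2\le0$, a contradiction; therefore $v_1\le v_2$ in $\QQ$.

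The main obstacle, and the only step requiring more than the continuity of $H$, is the control of $H(|p+\delta\nabla\langle\bar x\rangle|)-H(|p-\delta\nabla\langle\bar y\rangle|)$. Since $H$ is not assumed globally Lipschitz --- and for $H(r)=r^p$ with $0<p<1$ the map $\xi\mapsto H(|\xi|)$ fails to be Lipschitz near the origin --- this difference cannot simply be bounded by a Lipschitz constant times $2\delta$; it is precisely the combination of monotonicity and subadditivity that replaces it by $H(2\delta)$, which vanishes as $\delta\to0$. Everything else --- the penalization estimates, the interiority in time of the maximum point, and the choices of $T$ and $\sigma$ --- is routine for first-order equations and requires no second-order tools such as Ishii's lemma.
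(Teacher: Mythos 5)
Your argument is correct in substance, but it takes a genuinely different route from the paper's. The paper first uses monotonicity and subadditivity in the form $H(|\xi_1|)+H(|\xi_2-\xi_1|)\ge H(|\xi_2|)$ to assert that $w=v_1-v_2$ is a viscosity subsolution of $\partial_t z-H(|\nabla z|)=0$ with $w(\cdot,0)\le 0$, then replaces $H(|\xi|)$ by the regularized Hamiltonian $H\bigl(\sqrt{|\xi|^2+\delta^2}\bigr)$ --- this is where the hypothesis that $H$ is uniformly Lipschitz on $(\delta,\infty)$ enters, since it makes the regularized Hamiltonian globally Lipschitz --- and compares $w$ with the explicit solution $W_\delta(t)=H(\delta)\,t$ by invoking the classical comparison theorem of Crandall and Lions \cite[Theorem~V.3]{CL83}; letting $\delta\to 0$ and using $H(0)=0$ gives the claim. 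You instead run the doubling-of-variables machinery directly on $\QQ$, and your key observation --- that monotonicity plus subadditivity yield the uniform modulus $|H(r)-H(s)|\le H(|r-s|)$, so the penalization error in the Hamiltonian is absorbed into $H(2\delta)$ --- plays exactly the role that the Lipschitz regularization plus the citation of \cite{CL83} play in the paper. Your proof is more self-contained (the paper's reduction step, if written out in full at the viscosity level, would itself require a doubling argument) and it shows in passing that the local Lipschitz hypothesis on $H$ away from the origin is not actually needed; the paper's proof is shorter because it delegates all of this to the cited theorem.

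One small correction to your write-up: after subtracting the two viscosity inequalities you need an upper bound on $H(|p-\delta\nabla\langle\bar y\rangle|)-H(|p+\delta\nabla\langle\bar x\rangle|)$, whereas the chain you display bounds the reverse difference, namely $H(|p+\delta\nabla\langle\bar x\rangle|)\le H(|p-\delta\nabla\langle\bar y\rangle|)+H(2\delta)$. The fix is immediate: since $\bigl|\,|p-\delta\nabla\langle\bar y\rangle|-|p+\delta\nabla\langle\bar x\rangle|\,\bigr|\le 2\delta$, the same monotonicity-plus-subadditivity argument with the two gradients interchanged gives $H(|p-\delta\nabla\langle\bar y\rangle|)\le H(|p+\delta\nabla\langle\bar x\rangle|)+H(2\delta)$, which is the inequality that actually produces $\sigma/(T-\bar t)^2\le H(2\delta)$ and hence the contradiction.
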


The proof of Lemma~\ref{subaddH} which we give below was kindly
indicated to us by G.~Barles \cite{psbarles}. 

\begin{proof}
Let $v_1\in C_b(\QQ)$ (resp. $v_2\in C_b(\QQ)$) be a viscosity
subsolution  (resp. supersolution) of ~\eqref{HJ} and assume that
$v_1(x,0)\leq v_2(x,0)$ for $x\in\mathbb{R}^N$. We first infer from
the monotonicity and subadditivity of $H$ that  
$$
H(|\xi_1|) + H(|\xi_2-\xi_1|)\ge H(|\xi_1|+|\xi_2-\xi_1|)\ge
H(|\xi_2|) , \quad (\xi_1,\xi_2)\in \mathbb{R}^N\times\mathbb{R}^N. 
$$
Setting $w=v_1-v_2$, it readily follows from the properties of $v_1$,
$v_2$ and the previous inequality that $w$ is a subsolution to 
$\partial_t z - H(|\nabla z|)=0$ in $\QQ$ with $w(\cdot,0)\le 0$. 

Now, on the one hand, if $\delta\in (0,1)$, we have $-H\left( \sqrt{|\xi|^2+\delta^2} \right) \le - H(|\xi|)$ for $\xi\in\mathbb{R}^N$ by the monotonicity of $H$. Consequently, $w$ is also a subsolution to 
\begin{equation}
\label{luchon}
\partial_t z - H\left( \sqrt{|\nabla z|^2 +\delta^2} \right)=0 \quad\mbox{ in  }\quad \QQ
\end{equation}
with $w(\cdot,0)\le 0$. On the other hand, $W_\delta:t\longmapsto H(\delta) t$ clearly solves \eqref{luchon} with $W_\delta(0)=0$ and the Hamiltonian $H_\delta: \xi\longmapsto H\left( \sqrt{|\xi|^2 +\delta^2} \right)$ is uniformly Lipschitz continuous in $\mathbb{R}^N$. We are then in a position to apply \cite[Theorem~V.3]{CL83} and conclude that $w(x,t)\le W_\delta(t)$ for $(x,t)\in\QQ$. Since $H$ is continuous and vanishes at zero, the claimed result then follows by letting $\delta\to 0$. 
\end{proof}

\end{appendix}




\begin{thebibliography}{100}

\bibitem{ABA98} L. Amour and M. Ben-Artzi, {\sl Global existence and
decay for viscous Hamilton-Jacobi equations}, Nonlinear Anal. {\bf 31}
(1998), 621--628. 
   
\bibitem{BCD97} M. Bardi and I. Capuzzo-Dolcetta, {\sl ``Optimal Control and Viscosity Solutions of Hamilton-Jacobi-Bellman Equations''}, Systems Control Found. Appl., Birkh\"auser, Boston, 1997.  

\bibitem{Ba90} G. Barles, {\sl Regularity results for first order Hamilton-Jacobi equations}, Differential Integral Equations \textbf{3} (1990), 103--125.

\bibitem{Ba90b} G. Barles, {\sl Uniqueness and regularity results for first order Hamilton-Jacobi equations}, Indiana Univ. Math. J. \textbf{39} (1990), 443--466.

\bibitem{Ba94} G. Barles, {\sl ``Solutions de Viscosit\'{e} des
\'{E}quations de Hamilton-Jacobi''},  Math\'ematiques \& Applications
\textbf{17}, Springer-Verlag, Paris, 1994. 

\bibitem{psbarles} G. Barles, {\sl personal communication}, 2004.

\bibitem {BL99} S. Benachour and Ph. Lauren\c{c}ot, {\sl Global
solutions to viscous Hamilton-Jacobi equations with irregular initial
data}, Comm. Partial Differential Equations {\bf 24} (1999),
1999--2021. 

\bibitem{Be81} Ph. B\'{e}nilan, {\sl ``Evolution equations and
accretive operators''}, lecture notes taken by S.~Lenhardt, Univ. of
Kentucky, 1981. 
  
\bibitem{BC81} Ph. B\'enilan and M.G. Crandall, {\sl Regularizing
effects of homogeneous evolution equations}, in {\sl ``Contributions
to Analysis and Geometry (Baltimore, MD., 1980)''}, pp.~23--39, John
Hopkins Univ. Press, Baltimore, Md., 1981. 
  
\bibitem{CEL84} M.G. Crandall, L.C. Evans and P.-L. Lions, {\sl Some
properties of viscosity solutions of Hamilton-Jacobi equations},
Trans. Amer. Math. Soc. {\bf 282} (1984), 487--502. 

\bibitem{CL83} M.G. Crandall and P.-L. Lions, {\sl Viscosity
solutions to Hamilton-Jacobi equations}, Trans. Amer. Math. Soc. {\bf
277} (1983), 1--42. 

\bibitem{Cr97} M.G. Crandall , {\sl Viscosity solutions: a
primer,}  in   {\sl ``Viscosity Solutions and Applications
(Montecatini Terme, 1995)''}, I.~Capuzzo Dolcetta and P.-L.~Lions
(Eds.), pp.~1--43, Lecture Notes in Math. \textbf{1660}, Springer, Berlin, 1997.

\bibitem {Ev98} L.C. Evans,{\sl ``Partial Differential Equations''},
Grad. Stud. Math. \textbf{19}, Amer. Math. Soc., Providence, RI,
1998. 

\bibitem{GGK03} B.H. Gilding, M. Guedda and R. Kersner, {\sl The
Cauchy problem for $u_t=\Delta u+|\nabla u|^q$, }
J. Math. Anal. Appl. {\bf 284} (2003), 733--755. 

\bibitem{Gi05} B.H. Gilding, {\sl The Cauchy problem for
$u_t=\Delta u+|\nabla u|^q$, large-time behaviour}, J. Math. Pures
Appl. {\bf 84} (2005), 753--785. 

\bibitem{Is97} H. Ishii, {\sl Comparison results for Hamilton-Jacobi
equations without growth condition on solutions from above}, 
Appl. Anal. {\bf 67} (1997), 357--372.

\bibitem{Ka95} M.A. Katsoulakis, \textsl{A representation formula and regularizing properties for viscosity solutions of second-order fully nonlinear degenerate parabolic equations}, Nonlinear Anal. \textbf{24} (1995), 147--158

\bibitem{Li82} P.-L. Lions, {\sl ``Generalized Solutions of
Hamilton-Jacobi Equations''}, Res. Notes in Math., \textbf{69}, Pitman,
Boston, 1982. 

\bibitem{Li85}  P.-L. Lions, {\sl Regularizing effects for
first-order Hamilton-Jacobi equations}, Appl. Anal. {\bf 20} (1985),
283--307. 

\bibitem{St02} T. Str\"{o}mberg, {\sl The Hopf-Lax formula gives
the unique viscosity solution }, Differential Integral Equations {\bf
15} (2002), 47--52. 

\bibitem{adi} A. Taflia, {\sl Nonlinear viscous Hamilton-Jacobi
equations with gradient dependent function}, M. Sc. Thesis, Hebrew
University of Jerusalem (2004, unpublished). 

\bibitem{VW94} J.L. V\'azquez and M. Walias, {\sl Existence and
uniqueness of solutions of diffusion-absorption equations with general
data}, Differential Integral Equations {\bf 7} (1994), 15--36. 

\end{thebibliography}
\end{document}